\documentclass[11pt]{article}
\usepackage[utf8]{inputenc}
\usepackage{amsmath, amssymb, amsthm}
\usepackage{graphicx}
\usepackage{bm}
\usepackage{booktabs} 
\usepackage{float}    
\usepackage[margin=1in]{geometry} 
\usepackage{caption}
\usepackage{cite}
\usepackage{hyperref}
\usepackage{listings}
\usepackage{xcolor}
\usepackage{algorithm}
\usepackage{algorithmic}
\usepackage{authblk}
\newtheorem{theorem}{Theorem}
\newtheorem{lemma}{Lemma}
\newtheorem{remark}{Remark}
\newtheorem{definition}{Definition}

\DeclareMathOperator{\Gr}{Gr}
\DeclareMathOperator{\St}{St}

\title{Stable High-Order Interpolation on the Grassmann Manifold by Maximum-Volume Coordinates and Arnoldi Orthogonalization}
 \author[1]{Qiang Niu} 
\author[1,2]{Wen Jiang}
 \author[1]{Jie Fei\thanks{Corresponding author: Jie.Fei@xjtlu.edu.cn}}
 \author[1]{Ruoyu Xiong}
  \author[1,2]{Yuxuan Li}
\affil[1]{School of Mathematics and Physics, Xi'an Jiaotong-Liverpool University, Su Zhou, 215123, P. R. China}
\affil[2]{Department of Mathematical Sciences, University of Liverpool, Liverpool, United Kingdom}
 
\date{}

\begin{document}

\maketitle

\begin{abstract}
High-order interpolation on the Grassmann manifold $\Gr(n, p)$ is often hindered by the computational overhead and derivative instability of SVD-based geometric mappings. To solve the challenges, we propose a stabilized framework that combines Maximum-Volume (MV) local coordinates with Arnoldi-orthogonalized polynomial bases. First, manifold data are mapped to a well-conditioned Euclidean domain via MV coordinates. The approach bypasses the costly matrix factorizations inherent to traditional Riemannian normal coordinates. Within the coordinate space, we use the Vandermonde-with-Arnoldi (V+A) method for Lagrange interpolation and its confluent extension (CV+A) for derivative-enriched Hermite interpolation. By constructing discrete orthogonal bases directly from the parameter nodes, the solution of ill-conditioned linear system is avoided. Theoretical bounds are established to verify the stability of the geometric mapping and the polynomial approximation. Extensive numerical experiments demonstrate that the proposed MV-(C)V+A framework can produce highly accurate approximation in high-degree polynomial interpolation.\\

\noindent {\bf{Keywords.}} Grassmann manifold, interpolation, Arnoldi method, Maximum-Volume coordinates, QR decomposition.\\

\noindent {\bf{MSC(2020).}} 15B10, 65D05, 65F99, 53C30, 53C80
\end{abstract}


\section{Introduction}

Interpolation of manifold-valued data is important in scientific and engineering applications where the geometric structure must be preserved \cite{ciaramella2026grassmann,absil2008optimization,gander2000change,Zhang2025LawsonConvexDual,amsallem2008interpolation,zimmermann2014locally}. Interpolation on the Grassmann manifold $\Gr(n, p)$ is widely used in parametric model order reduction \cite{benner2015survey, amsallem2008interpolation, Zhang2025AAAduplexer}, computer vision \cite{man2012advances, souza2023grassmannian}, and machine learning \cite{kaneko2013empirical, massart2020quotient}. In parametric model order reduction, reduced bases computed via proper orthogonal decomposition at sampled parameters are treated as points on the Grassmann manifold, which enables efficient approximation at unseen parameters \cite{pfaller2020parametric, hess2023data}. Since Euclidean interpolation of matrix entries fails to respect the underlying quotient geometry, manifold-aware techniques are often needed \cite{bendokat2020grassmann, absil2016differentiable}.

High-order interpolation on manifolds usually has two numerical difficulties. The geometric challenge comes from traditional normal-coordinate formulations, which rely on matrix decompositions such as SVD-based logarithm and exponential maps \cite{ciaramella2026grassmann,jacobsson2024approximating, doCarmo1992riemannian, lee2018introduction}. For Hermite interpolation, where derivative data must be transported, it requires differentiating through matrix decompositions, an operation that is computationally costly and numerically sensitive \cite{zimmermann2020hermite, zimmermann2017matrix, zimmermann2022computing}. The algebraic challenge arises once data is mapped to the coordinate space: monomial-based interpolation leads to Vandermonde systems whose condition numbers grow exponentially with polynomial degree, which makes unstable coefficient recovery and error amplification \cite{brubeck2021vandermonde, higham2002accuracy,trefethen2020}. The numerical instability often limits practitioners to low-degree approximations, even when the underlying function is smooth.

Recently, Zimmermann \cite{zimmermann2020hermite, zimmermann2024multivariate} developed a general approach for Hermite interpolation on Riemannian manifolds and quantified error propagation from tangent space to manifold. Jensen and Zimmermann \cite{jensen2024maximum} introduced maximum-volume coordinates, a matrix-decomposition-free local coordinate for $\Gr(n,p)$. By selecting Stiefel subblocks with maximized determinant, the approach ensures well-conditioned geometric mappings without SVD computations. The MV approach also provides an algebraic derivative mapping formula that avoids differentiating matrix factorizations. The maximum-volume principle, originally developed for pseudoskeleton approximations and rank-revealing factorizations, has proven effective for constructing well-conditioned submatrices \cite{goreinov1997theory, goreinov2010how}.

Motivated by the recent success of the Vandermonde-with-Arnoldi (V+A) method and its confluent (CV+A) and multivariate extensions in eliminating algebraic instability \cite{brubeck2021vandermonde, niu2022confluent, zhang2025multivariate, zhu2025convergence}, this paper develops a stable polynomial interpolation framework for data on the Grassmann manifold. By integrating these Arnoldi-based techniques with maximum-volume (MV) coordinates, our approach simultaneously resolves two critical numerical bottlenecks: MV coordinates geometrically stabilize the domain mapping without costly SVDs, while the (C)V+A method avoids the ill-conditioning of high-degree polynomial construction. Building on recent advances in manifold error analysis \cite{zimmermann2025high, mataigne2024bounds, seguin2024hermite}, the MV-(C)V+A framework consists of the following steps.
First, geometric preprocessing is performed via Householder-stabilized coordinates, which controls the geometric condition number and ensures a well-conditioned local representation of the manifold.
Second, stable Euclidean interpolation is carried out using Arnoldi-orthogonalized polynomial bases and then the interpolated coordinates are   mapped back to the manifold through a reconstruction procedure.
Finally, a rigorous convergence and error analysis of the MV-(C)V+A method is established, which shows that the total interpolation error admits a decomposition into geometric and algebraic components and achieves high precision.

The remainder of this paper is organized as follows. Section~\ref{sec2} reviews MV coordinates and the associated derivative mappings. Section~\ref{sec3} recalls the V+A and CV+A methods. Section~\ref{sec4} introduces the integrated MV-(C)V+A framework together with complete algorithmic formulations. Section~\ref{sec5} develops the convergence theory and provides explicit error bounds. Section~\ref{sec6} reports numerical experiments and discusses performance of the proposed method. Section~\ref{sec7} concludes the paper and outlines future work.

\section{Interpolation on the Grassmann Manifold}
\label{sec2}


Let $\Gr(n,p)$ denote the Grassmann manifold of $p$-dimensional subspaces in $\mathbb{R}^n$, represented numerically through Stiefel matrices $U\in\St(n,p)$ with orthonormal columns. Each subspace $\mathcal{U} \in \Gr(n,p)$ corresponds to an equivalence class $[U] = \{UQ : Q \in O(p)\}$. Let $f: D \rightarrow \Gr(n, p)$ be a differentiable manifold-valued function defined on a parameter domain $D \subset \mathbb{R}$. Given a discrete data set $\{(t_i, \mathcal{U}_i)\}_{i=1}^m$ sampled from the function, where $t_i \in \mathbb{R}$ are parameter nodes and $\mathcal{U}_i \in \Gr(n, p)$ are the corresponding subspaces, the goal is to construct an interpolant $\hat{f}: D \rightarrow \Gr(n, p)$ such that $\hat{f}(t_i) = \mathcal{U}_i$ for all $i$.

For Hermite interpolation, the problem is extended by requiring the interpolant to also match manifold velocities at the sample points. In numerical implementations, each subspace $\mathcal{U}_i$ is specified by a Stiefel representative matrix $U_i \in \mathrm{St}(n,p)$ such that $\operatorname{span}(U_i)=\mathcal{U}_i$, and its velocity is represented by a tangent lift $\dot{U}_i \in T_{U_i}\mathrm{St}(n,p)$. To remove the ambiguity in the choice of representative, we take $\dot{U}_i$ to be a horizontal lift satisfying $U_i^T \dot{U}_i = 0$. The interpolant matrix function $\hat{U}(t)$ must therefore satisfy
\begin{equation*}
    \operatorname{span}(\hat{U}(t_i)) = \mathcal{U}_i, \quad i = 1, \dots, m,
\end{equation*}
and the corresponding local coordinate derivatives must match the specified tangent data at each node $t_i$.


The maximum-volume (MV) coordinate  \cite{jensen2024maximum} is a matrix-decomposition-free alternative to Riemannian normal coordinates for representing subspaces on the Grassmann manifold. The approach exploits the natural block structure of Stiefel representatives to construct local coordinates using only algebraic operations, and avoids the singular value decompositions. Given a sample basis $U\in\St(n,p)$ and a row permutation $P$, we partition
 $$
PU=\begin{bmatrix}\widetilde U_1\\ \widetilde U_2\end{bmatrix},
\qquad \widetilde U_1\in\mathbb{R}^{p\times p},\quad \widetilde U_2\in\mathbb{R}^{(n-p)\times p}.
 $$
The MV coordinates are then defined as
\begin{equation} \label{eq:mvcoord}
\Xi(U)=\widetilde U_2\widetilde U_1^{-1}\in\mathbb{R}^{(n-p)\times p},\quad N_{\text{coord}} = (n-p)p.
\end{equation}
Unlike Riemannian normal coordinates, the mapping avoids SVD, matrix logarithms, and matrix exponentials entirely within the interpolation loop, requiring only matrix inversion and multiplication.


The numerical quality of \eqref{eq:mvcoord} depends critically on the conditioning of $\widetilde U_1$. The MV strategy selects permutations to maximize $|\det(\widetilde U_1)|$, which is equivalent to minimizing $\|\widetilde U_1^{-1}\|_F$. Jensen and Zimmermann \cite{jensen2024maximum} established a bound guaranteeing $\|\Xi\|_F \le \sqrt{p(n-p)}$, ensuring that the coordinate representation remains well-conditioned. The maximum-volume principle, originally developed for pseudoskeleton approximations and rank-revealing factorizations, has proven effective for constructing well-conditioned submatrices \cite{goreinov1997theory, goreinov2010how}.

In the original MV framework \cite{jensen2024maximum} , for multiple samples $\{U_i\}_{i=1}^m$, a single global permutation is typically chosen to balance conditioning across all points:
\begin{equation}
    P_\star \approx \arg \min_P \max_{1\le i\le m} \| \tilde{U}_{i,1}^{-1} \|_F. \label{eq:maxvol}
\end{equation}
Geometrically, selecting a single global permutation $P$ implies mapping all sample subspaces $\{\mathcal{U}_i\}_{i=1}^m$ into a single affine coordinate chart on the Grassmann manifold. However, relying on a discrete permutation matrix $P_\star$ has numerical drawbacks for high-order interpolation. Finding the optimal global permutation requires combinatorial maxvol heuristic searches. If dynamic coordinate switching occurs between different trajectory segments, the discrete jumps in $P$ destroy the continuity required for smooth polynomial fitting. To address the point, we will introduce a continuous orthogonal stabilization approach that achieves the benefits of the MV principle without relying on discrete permutations in Section 4.


For Hermite interpolation, tangent vectors on the manifold must be converted to derivatives in the coordinate domain. Consider a smooth curve $\mathcal{U}(t)$ on $\Gr(n,p)$ with Stiefel representative $U(t)$. Its velocity is given by $\dot{\mathcal{U}}(t) = \dot{U}(t)U(t)^T + U(t)\dot{U}(t)^T \in T_{\mathcal{U}(t)}\Gr(n,p)$. After applying the global permutation $P_\star$, the velocity partitions as $\dot{\widetilde U}=\begin{bmatrix}\widetilde T_1\\ \widetilde T_2\end{bmatrix}$.

Applying the chain rule to the MV representation $\Xi(t) = \widetilde U_2(t) \widetilde U_1(t)^{-1}$ and evaluating at $t=0$, the MV-coordinate derivative satisfies \cite{jensen2024maximum, niu2022confluent}
 $$
\dot\Xi = (\widetilde T_2 - \Xi \widetilde T_1)\widetilde U_1^{-1}.
 $$
The expression is purely algebraic, involving only matrix multiplications and linear solves. It  avoids differentiating SVD-based maps, one of the computational advantages of the MV approach \cite{ciaramella2026grassmann}. For each sample point, we compute $\dot\Xi_i = (\widetilde T_{i,2} - \Xi_i \widetilde T_{i,1}) \widetilde U_{i,1}^{-1}$.
After preprocessing, we vectorize the coordinates and their derivatives as $\mathbf{x}_i = \operatorname{vec}(\Xi_i) \in \mathbb{R}^{N_{\text{coord}}}$ and $\dot{\mathbf{x}}_i = \operatorname{vec}(\dot\Xi_i) \in \mathbb{R}^{N_{\text{coord}}}$.


Given an interpolated coordinate matrix $\Xi$, a Stiefel representative can be reconstructed using the inverse of the MV-representation. From \eqref{eq:mvcoord}, we have $\widetilde U_2 = \Xi \widetilde U_1$. Combined with the orthonormality condition $\widetilde U_1^T \widetilde U_1 + \widetilde U_2^T \widetilde U_2 = I_p$, we obtain $\widetilde U_1^T (I_p + \Xi^T \Xi) \widetilde U_1 = I_p$. A numerically stable reconstruction is then
 $$
\widetilde U(\Xi)=
\begin{bmatrix}I_p\\ \Xi\end{bmatrix}
\left(I_p+\Xi^T\Xi\right)^{-1/2},
\qquad U(\Xi)=P_\star^T\widetilde U(\Xi).
 $$
To maintain computational efficiency during reconstruction, we avoid explicitly computing the symmetric inverse square root $(I_p + \Xi^T \Xi)^{-1/2}$, which necessitates eigenvalue decompositions. Instead, we utilize the Cholesky factorization $L L^T = I_p + \Xi(s)^T \Xi(s)$. By defining the inverse factor $M = L^{-T}$, the reconstructed Stiefel representative is given by
\begin{equation}
    \widetilde U(s) = \begin{bmatrix} I_p \\ \Xi(s) \end{bmatrix} L^{-T}.
\end{equation}
Because $L^{-T}$ satisfies $(L^{-T})^T (I_p + \Xi(s)^T \Xi(s)) L^{-T} = I_p$, it guarantees $\widetilde U(s)^T \widetilde U(s) = I_p$. Furthermore, right-multiplying by the upper triangular matrix $L^{-T}$ instead of the symmetric root alters the basis only by an orthogonal gauge transformation, which leaves the point on the Grassmann manifold strictly invariant while significantly reducing the computational overhead to $\mathcal{O}(p^3)$.
The MV mapping can be interpreted as a well-conditioned reparameterization that separates geometric nonlinearity from the  interpolation operator. In the framework, interpolation is performed in a Euclidean space where standard polynomial schemes (e.g., Lagrange and Hermite) exhibit favorable stability and error properties, while the mapping ensures consistent projection onto $\Gr(n,p)$. 

\section{Vandermonde with Arnoldi and Its Confluent Extension}\label{sec3}

For nodes $\{t_i\}_{i=1}^m$, the monomial Vandermonde matrix $V_{ij}=t_i^{j-1}$ is often very ill-conditioned. The condition number $\kappa_2(V)$ grows exponentially with the polynomial degree $n$, causing coefficient blow-up and unstable polynomial evaluation at moderate to high degrees. This problem is more obvious for equispaced nodes and is exacerbated when derivative information is included \cite{brubeck2021vandermonde,higham2002accuracy}. Table \ref{tab:condpilot} illustrates this phenomenon for equally spaced nodes on $[-1,1]$, showing that while monomial Vandermonde conditioning deteriorates severely, the orthonormal basis matrix $Q$ from V+A remains well conditioned with $\kappa_2(Q)=1$.

\begin{table}[H]
\centering
\caption{Condition number growth: monomial Vandermonde vs. Arnoldi basis matrix}
\label{tab:condpilot}
\begin{tabular}{ccc}
\toprule
Number of nodes $m$ & $\kappa_2(V)$ (monomial) & $\kappa_2(Q)$ (V+A basis) \\
\midrule
12 & $4.08\times 10^4$ & $1.00$ \\
20 & $2.72\times 10^8$ & $1.00$ \\
30 & $1.84\times 10^{13}$ & $1.00$ \\
40 & $7.24\times 10^{17}$ & $1.00$ \\
\bottomrule
\end{tabular}
\end{table}

\subsection{The V+A Method}

The Vandermonde-with-Arnoldi (V+A) method replaces the unstable monomial basis with a discrete orthonormal polynomial basis built directly from the nodes via the Arnoldi process \cite{brubeck2021vandermonde}. The approach mitigates the
exponential growth of the condition number of Vandermonde matrices by operating within a Krylov subspace setting.

Let $X = \operatorname{diag}(x_1,\dots,x_m) \in \mathbb{R}^{m\times m}$ be the diagonal matrix of nodes and $\mathbf{e} = (1,\dots,1)^T \in \mathbb{R}^m$ the vector of ones. The Vandermonde matrix $V_x \in \mathbb{R}^{m \times (n+1)}$ with entries $(V_x)_{ij} = x_i^{j-1}$ can be expressed as
 $$
V_x = [\mathbf{e}, X\mathbf{e}, X^2\mathbf{e}, \ldots, X^n\mathbf{e}].
 $$
Consequently, its column space is precisely the Krylov subspace generated by $X$ and $\mathbf{e}$:
 $$
\operatorname{Col}(V_x) = \mathcal{K}_{n+1}(X, \mathbf{e}) = \operatorname{span}\{\mathbf{e}, X\mathbf{e}, X^2\mathbf{e}, \ldots, X^n\mathbf{e}\}.
 $$

Starting with the normalized vector $q_1 = m^{-1/2}\mathbf{e}$, the Arnoldi process applied to $(X, q_1)$ generates an orthonormal basis $Q_{n+1} = [q_1, q_2, \ldots, q_{n+1}] \in \mathbb{R}^{m \times (n+1)}$ satisfying $Q_{n+1}^T Q_{n+1} = I_{n+1}$, along with an upper Hessenberg matrix $\tilde{H}_n \in \mathbb{R}^{(n+1) \times n}$ such that
\begin{equation}
X Q_n = Q_{n+1} \tilde{H}_n = Q_n H_n + q_{n+1} h_{n+1,n} e_n^T, \label{eq:arnoldi_relation}
\end{equation}
where $Q_n$ consists of the first $n$ columns of $Q_{n+1}$, $H_n$ is the $n \times n$ principal submatrix of $\tilde{H}_n$, and $e_n$ is the $n$-th standard basis vector.

The fundamental relation \eqref{eq:arnoldi_relation} encodes the action of the operator $X$ on the Krylov basis. Denoting by $e_1$ the first column of the identity matrix, we have $Q_{n+1} e_1 = q_1$. From \eqref{eq:arnoldi_relation}, we obtain $X q_1 = Q_n H_n e_1$. By induction, it follows that
\begin{align*}
X^2 q_1 &= Q_n H_n^2 e_1, \\
&\vdots \\
X^n q_1 &= Q_{n+1} \begin{pmatrix} H_n^n e_1 \\ \star \end{pmatrix},
\end{align*}
where $\star$ denotes the scalar $h_{n+1,n} e_n^T H_n^{n-1} e_1$. In matrix form, the Arnoldi process thus yields a QR-style factorization of the Vandermonde matrix
\begin{equation}
V_x = [q_1, X q_1, \ldots, X^n q_1] = Q_{n+1} R_{n+1}, \label{eq:va_factorization}
\end{equation}
where $R_{n+1} \in \mathbb{R}^{(n+1) \times (n+1)}$ is an implicitly defined upper triangular matrix
 $$
R_{n+1} = \begin{pmatrix} e_1 & H_n e_1 & \cdots & H_n^n e_1 \\ 0 & 0 & \cdots & \star \end{pmatrix}.  $$

Given the data fitting problem $V_x \mathbf{c} = \mathbf{f}$ (or $V_x \mathbf{c} \approx \mathbf{f}$ in the least-squares sense), substituting the factorization \eqref{eq:va_factorization} yields $Q_{n+1} R_{n+1} \mathbf{c} = \mathbf{f}$. By defining the new coefficient vector $\mathbf{d} = R_{n+1} \mathbf{c}$, the system is transformed from an ill-conditioned monomial basis into a well conditioned orthogonal basis. Since $Q_{n+1}$ has orthonormal columns, $\mathbf{d}$ is computed reliably via orthogonal projection
 $$
\mathbf{d} = Q_{n+1}^T \mathbf{f}.  $$

To evaluate the interpolant at new points $s_1, \ldots, s_M$, we construct an evaluation basis $W_{n+1} \in \mathbb{R}^{M \times (n+1)}$ that obeys the exact recurrence relation encoded in $\tilde{H}_n$. Let $S = \operatorname{diag}(s_1,\ldots,s_M)$. Starting with $w_1 = \mathbf{e}_M / \sqrt{M}$, the subsequent columns for $k = 1, \ldots, n$ are generated via
\begin{align}
\tilde{w}_{k+1} &= S w_k - \sum_{j=1}^k \tilde{H}_n(j,k) w_j, \\
w_{k+1} &= \frac{\tilde{w}_{k+1}}{\tilde{H}_n(k+1,k)}. \label{eq:eval_recurrence}
\end{align}
This ensures that $W_{n+1}$ spans the Krylov subspace $\mathcal{K}_{n+1}(S, \mathbf{e}_M)$ and satisfies the relation
 $$
S W_n = W_{n+1} \tilde{H}_n.  $$
The interpolated values at the new points are then obtained by synthesizing the basis with the orthogonal coefficients:
 $$
\mathbf{f}(s) = W_{n+1} \mathbf{d}.  $$

The V+A approach offers three distinct advantages over traditional approaches. First, it bypasses the exponential ill-conditioning of Vandermonde matrices by working entirely within an orthonormal basis where the condition number is exactly one. Second, the Hessenberg matrix $\tilde{H}_n$ serves as a portable algebraic signature of the recurrence structure, enabling stable basis reconstruction at arbitrary evaluation points. Third, it cleanly decouples the fitting and evaluation phases: the computationally intensive Arnoldi orthogonalization is performed only once offline, while online evaluation reduces to a simple recurrence requiring only $\mathcal{O}(M n^2)$ operations. This makes the method highly suitable for real-time geometric interpolation and parametric model order reduction.

\subsection{The Confluent V+A Method}

Hermite interpolation imposes simultaneous constraints on function values and derivatives. In the monomial basis, this leads to the confluent Vandermonde system $\begin{bmatrix} V \\ V_{(1)} \end{bmatrix} \mathbf{c} = \begin{bmatrix} \mathbf{f} \\ \dot{\mathbf{f}} \end{bmatrix}$, which suffers from a condition number that deteriorates even more rapidly than its standard Vandermonde counterpart. The Confluent Vandermonde with Arnoldi (CV+A) method mitigates this by extending the Krylov subspace approach to accommodate derivative constraints via an augmented operator \cite{niu2022confluent}.

The core insight is that the column space of the confluent Vandermonde matrix naturally forms a Krylov subspace when driven by an augmented matrix. Let $X = \operatorname{diag}(x_1,\ldots,x_m) \in \mathbb{R}^{m\times m}$ and define
 $$
\mathbf{C} = \begin{pmatrix} X & O \\ I & X \end{pmatrix} \in \mathbb{R}^{2m \times 2m}, \quad \mathbf{b} = \begin{pmatrix} \mathbf{e} \\ \mathbf{0} \end{pmatrix} \in \mathbb{R}^{2m},  $$
where $I$ is the $m\times m$ identity matrix and $O$ is the zero matrix. The column space of the confluent system then satisfies
 $$
\operatorname{Col}\begin{pmatrix} V \\ V_{(1)} \end{pmatrix} = \mathcal{K}_{n+1}(\mathbf{C}, \mathbf{b}) = \operatorname{span}\{\mathbf{b}, \mathbf{C}\mathbf{b}, \mathbf{C}^2\mathbf{b}, \ldots, \mathbf{C}^n\mathbf{b}\}.  $$
This identification permits the direct application of the Arnoldi process to the augmented system $(\mathbf{C}, \mathbf{b})$, bypassing the monomial basis entirely.

Executing the Arnoldi process for $n$ steps generates an orthonormal basis $\mathcal{Q}_{n+1} \in \mathbb{R}^{2m \times (n+1)}$ and an upper Hessenberg matrix $\mathcal{H}_n \in \mathbb{R}^{(n+1) \times n}$ governed by the relation
 $$
\mathbf{C} \mathcal{Q}_n = \mathcal{Q}_{n+1} \mathcal{H}_n.  $$
This implicitly induces a QR-style factorization of the confluent Vandermonde matrix:
\begin{equation}
\begin{bmatrix} V \\ V_{(1)} \end{bmatrix} = \mathcal{Q}_{n+1} R_{n+1}, \label{eq:confluent-factorization}
\end{equation}
where $R_{n+1}$ is an upper triangular matrix that is never explicitly formed. Due to the block structure of the augmented space, the orthonormal basis $\mathcal{Q}_{n+1}$ naturally partitions into function and derivative domains
$$
\mathcal{Q}_{n+1} = \begin{pmatrix} Q_f \\ Q_d \end{pmatrix}, \quad Q_f, Q_d \in \mathbb{R}^{m \times (n+1)}.   $$ 
Here, $\mathcal{H}_n$ differentiates itself from the standard V+A Hessenberg matrix by simultaneously encoding the intertwined recurrence relations of both function values and their derivatives.

For the augmented data fitting problem $\begin{bmatrix} V \\ V_{(1)} \end{bmatrix} \mathbf{c} \approx \begin{bmatrix} \mathbf{f} \\ \dot{\mathbf{f}} \end{bmatrix}$, substituting \eqref{eq:confluent-factorization} shifts the problem into the well-conditioned orthogonal basis. We define the stable coefficient vector $\mathbf{d} = R_{n+1}\mathbf{c}$, which is efficiently computed via orthogonal projection
 $$
\mathbf{d} = \mathcal{Q}_{n+1}^T \begin{bmatrix} \mathbf{f} \\ \dot{\mathbf{f}} \end{bmatrix} \in \mathbb{R}^{n+1}. $$

For subspace-valued manifold data mapped to the Euclidean coordinate domain, we consider the block matrix $\mathbf{Y} = \begin{pmatrix} \mathbf{X} \\ \dot{\mathbf{X}} \end{pmatrix} \in \mathbb{R}^{2m \times N_{\text{coord}}}$, where the rows of $\mathbf{X}$ and $\dot{\mathbf{X}}$ contain the vectorized MV coordinates $\mathbf{x}_i^T$ and their velocities $\dot{\mathbf{x}}_i^T$. The corresponding coefficient matrix is computed analogously
 $$
\mathbf{A} = \mathcal{Q}_{n+1}^T \mathbf{Y} \in \mathbb{R}^{(n+1) \times N_{\text{coord}}}.  $$
This orthogonal projection extracts all necessary representation information while strictly avoiding the algebraically toxic triangular system associated with $R_{n+1}$.

To evaluate the Hermite interpolant at new points $s_1, \ldots, s_M$, we construct an evaluation basis $\mathcal{W}_{n+1} \in \mathbb{R}^{2M \times (n+1)}$. Let $S = \operatorname{diag}(s_1,\ldots,s_M)$ and define the augmented evaluation operator
 $$
\mathcal{S} = \begin{pmatrix} S & O \\ I & S \end{pmatrix} \in \mathbb{R}^{2M \times 2M}.
 $$
Starting with $\mathcal{W}_1 = \begin{pmatrix} \mathbf{e}_M/\sqrt{M} \\ \mathbf{0}_M \end{pmatrix}$, the columns are generated iteratively for $k = 1, \ldots, n$:
\begin{align*}
\tilde{\mathcal{W}}_{k+1} &= \mathcal{S} \mathcal{W}_k - \sum_{j=1}^k \mathcal{H}_n(j,k) \mathcal{W}_j, \\
\mathcal{W}_{k+1} &= \frac{\tilde{\mathcal{W}}_{k+1}}{\mathcal{H}_n(k+1,k)}.
\end{align*}
This recurrence mirrors the original Arnoldi process, ensuring that $\mathcal{W}_{n+1}$ spans the augmented Krylov subspace $\mathcal{K}_{n+1}(\mathcal{S}, \mathcal{W}_1)$ and satisfies
 $$
\mathcal{S} \mathcal{W}_n = \mathcal{W}_{n+1} \mathcal{H}_n.  $$
Consequently, the evaluation basis also partitions into function and derivative components
 $$
\mathcal{W}_{n+1} = \begin{pmatrix} W_f \\ W_d \end{pmatrix}, \quad W_f, W_d \in \mathbb{R}^{M \times (n+1)}.  $$
In this architecture, $\mathcal{H}_n$ acts as an algebraic bridge, transporting the coupled derivative recurrence structures from the sample nodes to the evaluation points. The interpolated manifold coordinates and their velocities are finally obtained via
 $$
\mathbf{F}(s) = W_f \mathbf{A}, \quad \dot{\mathbf{F}}(s) = W_d \mathbf{A}.  $$

In summary, the V+A and CV+A approaches  eliminate the algebraic instability inherent to monomial and confluent Vandermonde systems while preserving the analytical flexibility of polynomial approximation. These attributes make them suitable companions to MV coordinates, which resolve the geometric instability of manifold coordinate mappings. Recent multivariate extensions and convergence analyses further corroborate the robustness of this methodology \cite{zhang2025multivariate, zhu2025convergence}.

\section{The Integrated MV-(C)V+A Approach}
\label{sec4}

We propose a decoupled three-stage framework for high-order interpolation on the Grassmann manifold $\Gr(n,p)$. By integrating Maximum-Volume (MV) coordinates with Arnoldi-based polynomial approximation, this approach isolates the geometric nonlinearity of the manifold from the algebraic ill-conditioning of polynomial fitting.

Given a set of distinct parameter nodes $\{t_i\}_{i=1}^m \subset \mathbb{R}$ and corresponding subspace samples $\mathcal{U}_1, \dots, \mathcal{U}_m \in \Gr(n,p)$ (and optionally manifold velocities $\dot{\mathcal{U}}_1, \dots, \dot{\mathcal{U}}_m$), the objective is to evaluate the interpolating curve $\mathcal{U}(s)$ at an unseen parameter $s \in \mathbb{R}$. Since direct polynomial combination of subspaces is geometrically invalid, the data must be transformed into a Euclidean domain. The systematic resolution comprises the following steps.

\subsection{Stage I: Geometric Stabilization via Orthogonal Transformation}

The first step maps the manifold-valued data to a well-conditioned Euclidean coordinate domain. Unlike the permutation-based approach in \cite{jensen2024maximum}, which relies on discrete row swaps, we construct a continuous orthogonal transformation to stabilize the coordinate representation.

\begin{itemize}
    \item \textbf{Global coordinate system via Householder QR.} A reference point $U_m$ (typically the last sample) is factorized as
 $$
        U_m = \mathcal{Q} \begin{bmatrix} R \\ \mathbf{0} \end{bmatrix},
 $$
    where $\mathcal{Q} \in O(n)$ is an orthogonal matrix and $R \in \mathbb{R}^{p \times p}$ is upper triangular. The Householder QR decomposition constructs $\mathcal{Q}$ as a product of elementary reflectors, represented in WY block form $\mathcal{Q} = I - WY^T$ for computational efficiency when $n \gg p$. Unlike a permutation matrix, $\mathcal{Q}$ provides a continuous rotation that minimizes geometric distortion across the entire dataset.

    \item \textbf{Coordinate extraction.} All samples are transformed by the same orthogonal map
 $$
        \tilde{U}_i = \mathcal{Q}^T U_i, \quad \dot{\tilde{U}}_i = \mathcal{Q}^T \dot{U}_i, \quad i = 1, \ldots, m.
 $$
    Each transformed representative is partitioned as $\tilde{U}_i = [\tilde{U}_{i,1}^T, \tilde{U}_{i,2}^T]^T$ with $\tilde{U}_{i,1} \in \mathbb{R}^{p \times p}$. The coordinates and their derivatives follow from the analytical expressions
\begin{align*}
    \Xi_i &= \tilde{U}_{i,2} \tilde{U}_{i,1}^{-1}, \\
    \dot{\Xi}_i &= (\dot{\tilde{U}}_{i,2} - \Xi_i \dot{\tilde{U}}_{i,1}) \tilde{U}_{i,1}^{-1}.
\end{align*}
    In practice, for localized subspace trajectories, this implicit alignment keeps the upper block $\tilde{U}_{i,1}$ of all sample points well-conditioned. This achieves the numerical benefits of the maximum-volume principle while eliminating the computational overhead of a separate maxvol search, ensuring a strictly continuous mapping essential for high-degree polynomial fitting.

    \item \textbf{Vectorization.} The coordinate matrices are flattened into vectors
    $$
        \mathbf{x}_i = \operatorname{vec}(\Xi_i), \quad \dot{\mathbf{x}}_i = \operatorname{vec}(\dot{\Xi}_i) \in \mathbb{R}^{N_{\text{coord}}},
  $$
    where $N_{\text{coord}} = p(n-p)$. The original manifold interpolation problem is now reduced to a standard Euclidean regression task on the data sets $\{(t_i,\mathbf{x}_i)\}$ and $\{(t_i,\dot{\mathbf{x}}_i)\}$.
\end{itemize}

\subsection{Stage II: Algebraic Interpolation via Krylov Subspaces}

In this step, we perform high-degree polynomial approximation in the coordinate domain. Standard practice constructs a Vandermonde matrix $V$ from the nodes $t_i$ and solves $V \mathbf{c} = \mathbf{X}$, where $\mathbf{X} = [\mathbf{x}_1, \ldots, \mathbf{x}_m]^T$. As $m$ or the target degree $n_{\text{poly}}$ increases, the condition number of $V$ grows exponentially, rendering the linear system numerically unsolvable.

The Vandermonde with Arnoldi (V+A) method circumvents the problem by replacing the monomial basis with an orthogonal basis generated from the nodes. The column space of $V$ is a Krylov subspace. With $D = \operatorname{diag}(t_1,\dots,t_m)$, the Arnoldi process produces

\begin{itemize}
    \item An orthonormal basis matrix $Q \in \mathbb{R}^{m \times (n_{\text{poly}}+1)}$ satisfying $Q^T Q = I$, which guarantees a condition number of one.
    \item An upper Hessenberg matrix $H$ that encodes the recurrence coefficients of the orthogonal basis.
\end{itemize}

Coefficient extraction reduces to orthogonal projection
 $$
\mathbf{A} = Q^T \mathbf{X} \in \mathbb{R}^{(n_{\text{poly}}+1) \times N_{\text{coord}}}.
 $$
The step bypasses the ill-conditioned Vandermonde system and achieves errors near machine precision \cite{Zhang2024ArnoldiFitting,zhang2025multivariate}.
For Hermite interpolation, the confluent V+A (CV+A) method extends the same idea through two approaches:
\begin{itemize}
    \item \textbf{Approach 1 (augmented Krylov subspace):} Constructs an augmented operator that simultaneously orthogonalizes function and derivative samples.
    \item \textbf{Approach 2 (differentiated recurrence):} Propagates derivative information through a recurrence built from the same Hessenberg matrix.
\end{itemize}

Evaluation at a new point $s$ does not use powers of $s$. Instead, the Hessenberg matrix $H$ drives a recurrence that generates an evaluation vector $w(s)$. The interpolated coordinate follows as $\mathbf{x}(s) = w(s) \mathbf{A}$.
 
\subsection{Stage III: Geometric Reconstruction}

The final stage maps the predicted Euclidean coordinates back to the Grassmann manifold.

\begin{itemize}
    \item \textbf{Cholesky-based retraction.} The predicted vector $\mathbf{x}(s)$ is reshaped into the coordinate matrix $\Xi(s) \in \mathbb{R}^{(n-p)\times p}$. From the definition of MV coordinates, $\widetilde U_2 = \Xi \widetilde U_1$, and orthonormality requires $\widetilde U_1^T(I_p + \Xi^T\Xi)\widetilde U_1 = I_p$. A stable Stiefel representative follows from the Cholesky factorization $LL^T = I_p + \Xi(s)^T\Xi(s)$:
 $$
        \widetilde U(s) = \begin{bmatrix} I_p \\ \Xi(s) \end{bmatrix} L^{-T}.
 $$
    This reconstruction requires only $O(p^3)$ operations and avoids explicit matrix square roots or SVD computations.

    \item \textbf{Velocity reconstruction (Hermite only).} When derivative information is required, the tangent vector in the coordinate domain is reshaped to $\dot{\Xi}(s)$. Define
$$
    K(s)=-(I_p+\Xi(s)^T\Xi(s))^{-1}\Xi(s)^T\dot{\Xi}(s).
$$
The corresponding horizontal lift is
$$
    \dot{\widetilde U}(s)=
    \begin{bmatrix}
        K(s) \\
        \dot{\Xi}(s)+\Xi(s)K(s)
    \end{bmatrix}
    L^{-T}.
$$

The manifold velocity then follows as
$$
    \dot{\widetilde P}(s)=\dot{\widetilde U}(s)\widetilde U(s)^T+\widetilde U(s)\dot{\widetilde U}(s)^T.
$$

    \item \textbf{Inverse transformation.} The global orthogonal map $\mathcal{Q}$ (from Stage I) is applied to return the subspace to its original orientation
 $$
        \mathcal{U}(s) = \mathcal{Q} \widetilde U(s), \qquad \dot{\mathcal{U}}(s) = \mathcal{Q} \dot{\widetilde U}(s).
 $$
\end{itemize}

The complete procedure is summarized in Algorithm \ref{alg:grassmann_l} for Lagrange interpolation and Algorithm \ref{alg:grassmann_hermite} for Hermite interpolation. Both algorithms follow the three-stage workflow described above.

\begin{algorithm}[H]
\caption{Grassmann Lagrange Interpolation via MV-V+A}
\label{alg:grassmann_l}
\begin{algorithmic}[1]
\REQUIRE Parameter nodes $\{t_i\}_{i=1}^m$, subspaces $\mathcal{U}_i \in \Gr(n, p)$ with Stiefel representatives $U_i$, target degree $n_{\text{poly}} \le m-1$, evaluation point $s \in \mathbb{R}$
\ENSURE Interpolated subspace $\mathcal{U}(s) \in \Gr(n, p)$

\medskip
\STATE \textbf{Stage I: Preprocessing (Geometric Mapping)}
\STATE Select reference representative $U_m$ and compute Householder QR factorization: $U_m = \mathcal{Q} \begin{bmatrix} R \\ \mathbf{0} \end{bmatrix}$.
\FOR{$i = 1, \ldots, m$}
\STATE Apply orthogonal transformation: $\tilde{U}_i = \mathcal{Q}^T U_i = \begin{pmatrix} \tilde{U}_{i,1} \\ \tilde{U}_{i,2} \end{pmatrix}$.
\STATE Compute coordinate matrix: $\Xi_i = \tilde{U}_{i,2} \tilde{U}_{i,1}^{-1} \in \mathbb{R}^{(n-p) \times p}$.
\STATE Vectorize: $\mathbf{x}_i = \operatorname{vec}(\Xi_i) \in \mathbb{R}^{N_{\text{coord}}}$.
\ENDFOR

\medskip
\STATE \textbf{Stage II: Euclidean Interpolation via V+A}
\STATE Form data matrix $\mathbf{X} = [\mathbf{x}_1, \ldots, \mathbf{x}_m]^T \in \mathbb{R}^{m \times N_{\text{coord}}}$.
\STATE Run Arnoldi process on $( \operatorname{diag}(t_i), \mathbf{e}_m )$ for $n_{\text{poly}}$ steps to obtain orthogonal basis $Q_{n_{\text{poly}}+1} \in \mathbb{R}^{m \times (n_{\text{poly}}+1)}$ and Hessenberg matrix $H$.
\STATE Compute stable coefficients via orthogonal projection: $\mathbf{A} = Q_{n_{\text{poly}}+1}^T \mathbf{X}$.
\STATE Generate evaluation vector $w(s)$ via recurrence \eqref{eq:eval_recurrence} driven by $H$.
\STATE Compute coordinate interpolant: $\mathbf{x}(s) = w(s) \mathbf{A}$.

\medskip
\STATE \textbf{Stage III: Postprocessing (Geometric Reconstruction)}
\STATE Reshape $\mathbf{x}(s)$ back to matrix form $\Xi(s) \in \mathbb{R}^{(n-p) \times p}$.
\STATE Compute Cholesky factorization: $L L^T = I_p + \Xi(s)^T \Xi(s)$.
\STATE Reconstruct Stiefel representative: $\widetilde U(s) = \begin{pmatrix} I_p \\ \Xi(s) \end{pmatrix} L^{-T}$.
\STATE Apply inverse orthogonal transformation: $\mathcal{U}(s) = \mathcal{Q} \widetilde U(s)$.
\RETURN $\mathcal{U}(s)$
\end{algorithmic}
\end{algorithm}

\begin{algorithm}[H]
\caption{Grassmann Hermite Interpolation via MV-CV+A}
\label{alg:grassmann_hermite}
\begin{algorithmic}[1]
\REQUIRE Parameter nodes $\{t_i\}_{i=1}^m$, subspaces $\mathcal{U}_i \in \Gr(n, p)$ with Stiefel representatives $U_i$, tangent vectors $\dot{U}_i \in T_{\mathcal{U}_i}\Gr(n, p)$, target degree $n_{\text{poly}} \le 2m-1$, evaluation point $s \in \mathbb{R}$
\ENSURE Interpolated subspace $\mathcal{U}(s) \in \Gr(n, p)$ and manifold velocity $\dot{\mathcal{U}}(s)$

\medskip
\STATE \textbf{Stage I: Preprocessing (Geometric Mapping)}
\STATE Select reference representative $U_m$ and compute Householder QR factorization: $U_m = \mathcal{Q} \begin{bmatrix} R \\ \mathbf{0} \end{bmatrix}$.
\FOR{$i = 1, \ldots, m$}
\STATE Apply orthogonal transformation: $\tilde{U}_i = \mathcal{Q}^T U_i = \begin{pmatrix} \tilde{U}_{i,1} \\ \tilde{U}_{i,2} \end{pmatrix}$ and $\dot{\tilde{U}}_i = \mathcal{Q}^T \dot{U}_i = \begin{pmatrix} \tilde{T}_{i,1} \\ \tilde{T}_{i,2} \end{pmatrix}$.
\STATE Compute MV coordinates: $\Xi_i = \tilde{U}_{i,2} \tilde{U}_{i,1}^{-1}$.
\STATE Map to coordinate velocity: $\dot{\Xi}_i = (\tilde{T}_{i,2} - \Xi_i \tilde{T}_{i,1}) \tilde{U}_{i,1}^{-1}$.
\STATE Vectorize: $\mathbf{x}_i = \operatorname{vec}(\Xi_i)$, $\dot{\mathbf{x}}_i = \operatorname{vec}(\dot{\Xi}_i) \in \mathbb{R}^{N_{\text{coord}}}$.
\ENDFOR

\medskip
\STATE \textbf{Stage II: Euclidean Interpolation via CV+A}
\STATE Form augmented data matrix $\mathbf{Y} = [\mathbf{x}_1, \dots, \mathbf{x}_m, \dot{\mathbf{x}}_1, \dots, \dot{\mathbf{x}}_m]^T \in \mathbb{R}^{2m \times N_{\text{coord}}}$.
\STATE \textbf{Krylov Basis Construction:}
\IF{\textbf{Approach 1 (Augmented Arnoldi)}}
\STATE Construct augmented operator $\mathbf{C} = \begin{pmatrix} \operatorname{diag}(t) & O \\ I & \operatorname{diag}(t) \end{pmatrix} \in \mathbb{R}^{2m \times 2m}$ and $\mathbf{b} = \begin{pmatrix} \mathbf{e}_m \\ \mathbf{0}_m \end{pmatrix}$.
\STATE Run Arnoldi on $(\mathbf{C}, \mathbf{b})$ to obtain orthogonal basis $\mathcal{Q}_{\text{aug}} = \begin{pmatrix} Q_f \\ Q_d \end{pmatrix} \in \mathbb{R}^{2m \times (n_{\text{poly}}+1)}$ and Hessenberg matrix $\mathcal{H}_{n_{\text{poly}}}$.
\STATE Compute coefficients via projection: $\mathbf{A} = \mathcal{Q}_{\text{aug}}^T \mathbf{Y}$.
\ELSIF{\textbf{Approach 2 (Surrogate Grid \& Differentiated Recurrence)}}
\STATE Define a surrogate grid $t_{\text{aux}}$ of size $N_{\text{aux}} \ge n_{\text{poly}}+1$ (e.g., Chebyshev nodes).
\STATE Run standard Arnoldi on $( \operatorname{diag}(t_{\text{aux}}), \mathbf{1}_{N_{\text{aux}}} )$ to extract the well-conditioned Hessenberg matrix $\mathcal{H}_{n_{\text{poly}}}$.
\STATE Drive recurrences \eqref{eq:eval_recurrence} and \eqref{eq:diff-recurrence} using $\mathcal{H}_{n_{\text{poly}}}$ on the \textit{actual} nodes $t_i$ to generate function basis $Q_f$ and derivative basis $Q_d \in \mathbb{R}^{m \times (n_{\text{poly}}+1)}$.
\STATE Solve the well-conditioned stacked system $\begin{bmatrix} Q_f \\ Q_d \end{bmatrix} \mathbf{A} = \mathbf{Y}$ via QR decomposition.
\ENDIF
\STATE Generate evaluation vectors $w_f(s)$ and $w_d(s)$ via synchronized recurrence using $\mathcal{H}_{n_{\text{poly}}}$.
\STATE Compute interpolants: $\mathbf{x}(s) = w_f(s) \mathbf{A}$ and $\dot{\mathbf{x}}(s) = w_d(s) \mathbf{A}$.

\medskip
\STATE \textbf{Stage III: Postprocessing (Geometric Reconstruction)}
\STATE Reshape $\mathbf{x}(s) \to \Xi(s)$ and $\dot{\mathbf{x}}(s) \to \dot{\Xi}(s)$.
\STATE \textbf{Retraction:} Compute $LL^T = I_p + \Xi(s)^T \Xi(s)$ and reconstruct $\widetilde U(s) = \begin{pmatrix} I_p \\ \Xi(s) \end{pmatrix} L^{-T}$.
\STATE \textbf{Tangent Recovery:} Compute horizontal lift: $\dot{\widetilde U}(s) = \begin{bmatrix} -\Xi(s)\dot\Xi(s)^T \\ \dot\Xi(s) \end{bmatrix} \widetilde U(s)$.
\STATE Apply inverse orthogonal transformation: $\mathcal{U}(s) = \mathcal{Q} \widetilde U(s)$ and $\dot{\mathcal{U}}(s) = \mathcal{Q} \dot{\widetilde U}(s)$.
\RETURN $\mathcal{U}(s)$ and $\dot{\mathcal{U}}(s)$.
\end{algorithmic}
\end{algorithm}

For Hermite interpolation, Algorithm~\ref{alg:grassmann_hermite} provides two mathematically equivalent but computationally distinct approaches. Approach 1 orthogonalizes the full Hermite data by constructing an augmented Krylov subspace of dimension $2m$, offering superior numerical stability for highly clustered nodes or extreme derivative variations. Approach 2 avoids expensive $2m \times 2m$ inner products by decoupling the recurrence generation from the sample nodes. An auxiliary surrogate grid extracts the well-conditioned Hessenberg matrix $\mathcal{H}$, preventing premature Arnoldi breakdown when $n_{\text{poly}} \ge m$. Once $\mathcal{H}$ is obtained, the derivative basis is generated analytically via the differentiated recurrence:
\begin{equation}\label{eq:diff-recurrence}
    \tilde{q}'_{k+1} = q_k + \operatorname{diag}(t) q'_k - \sum_{j=1}^k \mathcal{H}(j,k) q'_j, \quad q'_{k+1} = \frac{\tilde{q}'_{k+1}}{\mathcal{H}(k+1,k)},
\end{equation}
with $q'_1 = \mathbf{0}$. This generates the exact derivative Krylov basis $Q_d$ using only $\mathcal{H}$.
Approach 2 is computationally efficient and well-suited for rapid evaluations, while Approach 1 remains more robust for pathologically distributed nodes or extreme derivative variations.

\begin{remark}
For high-degree polynomial fitting ($n_{\text{poly}} > 30$), standard Modified Gram-Schmidt suffers from loss of orthogonality. A second pass of orthogonalization ensures that the basis matrix $Q$ maintains a condition number of exactly one. When solving the overdetermined stacked system in Approach 2, QR factorization should be used exclusively \cite{higham2002accuracy,trefethen2020}.
For high-order polynomial approximation, Stage I employs a Householder-stabilized orthogonalization \cite{Gander2014ScientificComputing,dax2004householder} rather than the traditional discrete maxvol permutation. While a standard permutation matrix $P$ restricts the local coordinate map to a selection of standard basis vectors. The procedure circumvents dynamic coordinate switching and preserves the underlying analytical smoothness.
\end{remark}

\section{Convergence and Error Analysis}
\label{sec5}
This section provides a theoretical analysis for the MV-(C)V+Amethod. We will demonstrate how the combination of geometric stabilization (Stage I) and algebraic interpolation (Stage II) achieves near-machine-precision subspace interpolation. The total error is splitted into three components: reconstruction mapping error, coordinate propagation error, and algebraic approximation error, and provide explicit bounds for each component.
 
We first establish the properties of the MV coordinate reconstruction formula, proving that it preserves orthonormality and smoothness while introducing no geometric bias. For completeness, we recall the necessary notation and provide formal definitions.

\begin{definition}
Let $\widetilde U \in \mathbb{R}^{n \times p}$ be a MaxVol optimized Stiefel matrix partitioned as
$$
\widetilde U = \begin{bmatrix} \widetilde U_1 \\ \widetilde U_2 \end{bmatrix}, \quad \widetilde U_1 \in \mathbb{R}^{p \times p}, \quad \widetilde U_2 \in \mathbb{R}^{(n-p) \times p}.
$$
The MV coordinates are defined by the mapping $\Phi: \text{St}(n,p) \to \mathbb{R}^{(n-p) \times p}$:
$$
\Xi = \widetilde U_2 \widetilde U_1^{-1}.
$$
\end{definition}

\begin{lemma}[Reconstruction and Orthogonality Preservation] \label{Lemma1}
For any MV coordinates $\Xi \in \mathbb{R}^{(n-p) \times p}$, the reconstructed matrix
$$
\widetilde U(\Xi) = \begin{bmatrix} I_p \\ \Xi \end{bmatrix} (I_p + \Xi^T \Xi)^{-1/2}
$$
satisfies $\widetilde U(\Xi)^T \widetilde U(\Xi) = I_p$, and its MV coordinates are exactly $\Xi$.
\end{lemma}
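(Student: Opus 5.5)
The proof is a direct computation in two parts. We write $G := I_p + \Xi^T\Xi$ and check that the inverse square root is well defined before anything else. $G$ is symmetric, and for every $v \in \mathbb{R}^p$ we have $v^T G v = \|v\|^2 + \|\Xi v\|^2 \ge \|v\|^2$. Hence $G$ is symmetric positive definite with all eigenvalues at least $1$. By the spectral theorem it has a unique symmetric positive definite square root $G^{1/2}$, and $G^{-1/2} := (G^{1/2})^{-1}$ is symmetric, invertible, and commutes with $G$. This is the only step that needs any care. The statement holds for every $\Xi$, so we must not lean on the MaxVol bound $\|\Xi\|_F \le \sqrt{p(n-p)}$; positive definiteness of $G$ alone has to carry the argument, and it does.

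For orthonormality we expand the block product:
$$\widetilde U(\Xi)^T\widetilde U(\Xi) = G^{-1/2}\begin{bmatrix} I_p & \Xi^T\end{bmatrix}\begin{bmatrix} I_p \\ \Xi\end{bmatrix} G^{-1/2} = G^{-1/2}\, G\, G^{-1/2} = I_p.$$
Here we use the symmetry of $G^{-1/2}$ to write $(G^{-1/2})^T = G^{-1/2}$. The middle product is exactly $G$, so this step needs nothing beyond that symmetry.

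For the coordinate-recovery claim we read off the two blocks of $\widetilde U(\Xi)$ under the partition of the Definition. The top block is $\widetilde U_1 = G^{-1/2}$, which is invertible since $G$ is positive definite. The bottom block is $\widetilde U_2 = \Xi\, G^{-1/2}$. Applying the map $\Phi$ gives
$$\widetilde U_2\widetilde U_1^{-1} = \Xi\, G^{-1/2} G^{1/2} = \Xi.$$
Because $\widetilde U_1$ is nonsingular, $\Phi$ is defined at $\widetilde U(\Xi)$, and the MV coordinates of the reconstruction are exactly $\Xi$. So $\widetilde U(\cdot)$ is a right inverse of $\Phi$ on all of $\mathbb{R}^{(n-p)\times p}$.

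We will also record a remark after the proof. The same two computations go through with $G^{-1/2}$ replaced by $L^{-T}$ from the Cholesky factorization $LL^T = G$, which is what Stage III actually uses. Orthonormality follows from $L^{-1} G L^{-T} = I_p$. The top block $L^{-T}$ is invertible, so the coordinates are again $\Xi L^{-T} (L^{-T})^{-1} = \Xi$. The two representatives differ by the orthogonal factor $G^{1/2}L^{-T}$, which confirms that the Cholesky variant gives the same point of $\Gr(n,p)$ and the same MV coordinates.
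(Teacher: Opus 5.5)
Your proof is correct and follows the same approach as the paper: with $G = I_p + \Xi^T\Xi$, the symmetry of $G^{-1/2}$ turns the Gram matrix into $G^{-1/2} G\, G^{-1/2} = I_p$, and the blocks give $\widetilde U_2 \widetilde U_1^{-1} = \Xi G^{-1/2} G^{1/2} = \Xi$. You also check explicitly that $G$ is symmetric positive definite, so $G^{-1/2}$ exists and $\widetilde U_1$ is invertible, which the paper takes for granted, and your remark on the Cholesky variant, including the orthogonal gauge factor $G^{1/2}L^{-T}$, is correct.
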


\begin{proof}
Let $S = (I_p + \Xi^T \Xi)^{-1/2}$. Note that $S$ is symmetric. The matrix $\widetilde U$ can be written as
$$
\widetilde U = \begin{bmatrix} S \\ \Xi S \end{bmatrix}.
$$
Computing the Gram matrix:
$$
\widetilde U^T \widetilde U = \begin{bmatrix} S^T & S^T \Xi^T \end{bmatrix} \begin{bmatrix} S \\ \Xi S \end{bmatrix} = S^T S + S^T \Xi^T \Xi S = S(I_p + \Xi^T \Xi)S = S S^{-2} S = I_p.
$$
To verify the MV coordinates, we compute $\widetilde U_2 \widetilde U_1^{-1} = (\Xi S)S^{-1} = \Xi$.
\end{proof}

\begin{lemma} \label{lem:smooth_iso}
The mapping $\Xi \mapsto \widetilde{U}(\Xi)$ defined in Lemma 1 is $C^\infty$ on its domain. Furthermore, at $\Xi = 0$, the differential $d\widetilde{U}|_0(\Delta) = \begin{bmatrix} 0 \\ \Delta \end{bmatrix}$, which constitutes an isometry between the tangent space $T_0 \mathbb{R}^{(n-p) \times p}$ and the tangent space $T_{\widetilde{U}(0)} \text{St}(n,p)$, where the base point is $\widetilde{U}(0) = \begin{bmatrix} I_p \\ 0 \end{bmatrix}$.
\end{lemma}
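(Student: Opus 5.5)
The plan has three parts: smoothness by composition, a first-order expansion at $\Xi=0$ to get the differential, and a short check that this differential is an isometry into the Stiefel tangent space.

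\textbf{Smoothness.} Write $\widetilde U(\Xi) = B(\Xi)\, G(\Xi)^{-1/2}$ with
\[
B(\Xi)=\begin{bmatrix} I_p\\ \Xi\end{bmatrix},\qquad G(\Xi)=I_p+\Xi^T\Xi .
\]
The map $B$ is affine. The map $G$ is polynomial, and it takes values in the open cone of symmetric positive definite matrices, since $G(\Xi)\succeq I_p$ for every $\Xi$. So the domain is all of $\mathbb{R}^{(n-p)\times p}$. The inverse square root $A\mapsto A^{-1/2}$ is $C^\infty$ (indeed real-analytic) on that cone. One way to see this is the holomorphic functional calculus: $A^{-1/2}=\frac{1}{2\pi i}\oint_\Gamma z^{-1/2}(zI-A)^{-1}\,dz$, with $\Gamma$ a contour in the right half-plane enclosing the spectrum, which is locally uniform in $A$. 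Alternatively, $A^{-1/2}$ is the inverse of the positive square root, which is smooth by the implicit function theorem applied to $X^2=A$: the Sylvester operator $H\mapsto XH+HX$ is invertible when $X\succ0$. Products and compositions of smooth maps are smooth, so $\Xi\mapsto\widetilde U(\Xi)$ is $C^\infty$.

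\textbf{Differential at $0$.} For a direction $\Delta$ and small $\epsilon$,
\[
G(\epsilon\Delta)=I_p+\epsilon^2\Delta^T\Delta ,
\]
so $G(\epsilon\Delta)^{-1/2}=I_p+O(\epsilon^2)$. Then
\[
\widetilde U(\epsilon\Delta)=\begin{bmatrix} I_p\\ \epsilon\Delta\end{bmatrix}\bigl(I_p+O(\epsilon^2)\bigr),
\]
and differentiating at $\epsilon=0$ gives $d\widetilde U|_0(\Delta)=\begin{bmatrix}0\\ \Delta\end{bmatrix}$. Equivalently, the product rule gives $dB|_0(\Delta)\,I_p + B(0)\, d(G^{-1/2})|_0(\Delta)$. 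The second term vanishes because $dG|_0(\Delta)=\Delta^T\cdot 0+0^T\Delta=0$.

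\textbf{Isometry.} The tangent space at $\widetilde U(0)=\begin{bmatrix}I_p\\0\end{bmatrix}$ is
\[
T_{\widetilde U(0)}\St(n,p)=\{Z : \widetilde U(0)^TZ+Z^T\widetilde U(0)=0\}.
\]
The image $\begin{bmatrix}0\\ \Delta\end{bmatrix}$ has zero top block, so it satisfies this condition, and it is in fact horizontal: $\widetilde U(0)^TZ=0$. Under the Euclidean (Frobenius) metric,
\[
\langle d\widetilde U|_0(\Delta_1),\,d\widetilde U|_0(\Delta_2)\rangle=\operatorname{tr}(\Delta_1^T\Delta_2),
\]
so inner products are preserved and the map is injective. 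The canonical metric $\operatorname{tr}\bigl(Z_1^T(I-\tfrac12 UU^T)Z_2\bigr)$ reduces to the same expression because $UU^T$ annihilates $Z$ in the top-block sense, so the statement holds for either metric.

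\textbf{Main point to handle carefully.} The differential is an isometry onto the horizontal space at $\widetilde U(0)$, which has dimension $(n-p)p$, not onto all of $T\St(n,p)$, whose dimension is $np-\tfrac{p(p+1)}2$. I would state "isometry" as an isometric embedding into the tangent space, or as an isometry onto the horizontal subspace. That horizontal subspace is the one identified with $T\Gr(n,p)$, which is the natural reading of the lemma.
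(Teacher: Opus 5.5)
Your proof is correct and follows essentially the same route as the paper. You get smoothness by composing with the inverse square root on the SPD cone, the differential from $dG|_0=0$ (equivalent to the paper's $dS|_0=0$) plus the product rule, and the isometry from a Frobenius trace computation. Your extra justification that $A\mapsto A^{-1/2}$ is smooth is a useful addition. Your caveat is also accurate and sharpens what the paper actually proves: for $p\ge 2$ the differential is an isometric embedding onto the horizontal space of dimension $(n-p)p$, not onto all of $T_{\widetilde U(0)}\St(n,p)$.
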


\begin{proof}
The mapping is composed of the matrix inverse square root $S(\Xi) = (I_p + \Xi^T \Xi)^{-1/2}$. Since $I_p + \Xi^T \Xi$ is symmetric positive definite, the matrix resides in the open set of SPD matrices. The matrix function $A \mapsto A^{-1/2}$ is known to be $C^\infty$ on the SPD cone, hence $\widetilde{U}(\Xi)$ is $C^\infty$ on its domain.

  At the origin $\Xi=0$, we have $S(0) = I_p$. To find the differential $dS|_0$, we differentiate the identity $S(\Xi)^{-2} = I_p + \Xi^T \Xi$ at $\Xi=0$:
\begin{equation}
d(S^{-2})|_0 = -S^{-1}(dS)S^{-2} - S^{-2}(dS)S^{-1} \Big|_0 = -2 dS|_0.
\end{equation}
Meanwhile, the derivative of the right-hand side is $d(I_p + \Xi^T \Xi)|_0 = \Delta^T \Xi + \Xi^T \Delta \big|_{\Xi=0} = 0$. Consequently, $-2 dS|_0 = 0$, which implies $dS|_0 = 0$. 
By applying the product rule to the reconstruction formula   $\widetilde{U}(\Xi) = \begin{bmatrix} I_p \\ \Xi \end{bmatrix} S(\Xi)$ (see Lemma~\ref{Lemma1}), the differential in direction $\Delta$ is
\begin{equation} \label{eq:diff_at_zero}
d\widetilde{U}|_0(\Delta) = \begin{bmatrix} 0 \\ \Delta \end{bmatrix} S(0) + \begin{bmatrix} I_p \\ 0 \end{bmatrix} dS|_0 = \begin{bmatrix} 0 \\ \Delta \end{bmatrix}.
\end{equation}

To prove that $d\widetilde{U}|_0$ is an isometry, we verify that it preserves the inner product. The tangent space $T_0\mathbb{R}^{(n-p)\times p}$ is endowed with the standard Euclidean inner product $\langle \Delta_1, \Delta_2 \rangle = \operatorname{tr}(\Delta_1^T \Delta_2)$. For the Stiefel manifold $\text{St}(n, p)$, we equip it with the standard Euclidean metric inherited from the embedding space $\mathbb{R}^{n \times p}$, defined as $\langle \delta U_1, \delta U_2 \rangle = \operatorname{tr}(\delta U_1^T \delta U_2)$. From \eqref{eq:diff_at_zero} we know that
\begin{equation}
\langle d\widetilde{U}|_0(\Delta_1), d\widetilde{U}|_0(\Delta_2) \rangle = \operatorname{tr}\left( \begin{bmatrix} 0 & \Delta_1^T \end{bmatrix} \begin{bmatrix} 0 \\ \Delta_2 \end{bmatrix} \right) = \operatorname{tr}(\Delta_1^T \Delta_2) = \langle \Delta_1, \Delta_2 \rangle.
\end{equation}
Thus, the mapping is an isometry at the origin.
\end{proof}

\subsection{Error Propagation in Coordinate Systems}

\begin{definition} 
The geometric condition number of the MV-representation is $\kappa_{\text{geo}} = \|\widetilde U_1^{-1}\|_2$. This quantity measures the metric distortion between the Grassmann manifold and the Euclidean coordinate domain.
\end{definition}

\begin{theorem} \label{thm:err_prop}
Let $\Xi$ be the true MV coordinates and $\hat{\Xi} = \Xi + \Delta$ be the interpolated coordinates. Under the Householder-stabilized system, the reconstruction error for the orthogonal projector $P = UU^T$ satisfies
\begin{equation} \label{eq:err_bound_final}
\|\hat{P} - P\|_F \leq \sqrt{2} \cdot \|\widetilde U_1\|_2 \cdot \|\Delta\|_F + \mathcal{O}(\|\Xi\|_2 \|\Delta\|_F + \|\Delta\|_F^2),
\end{equation}
where $\|\widetilde U_1\|_2$ is the spectral norm of the upper $p \times p$ block of the reconstructed Stiefel representative.
\end{theorem}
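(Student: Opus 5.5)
We will work with the explicit projector in MV coordinates. By Lemma~\ref{Lemma1}, any Stiefel representative of the reconstructed subspace has the form $\widetilde U(\Xi)=\begin{bmatrix} I_p\\ \Xi\end{bmatrix}S(\Xi)$ with $S(\Xi)=(I_p+\Xi^T\Xi)^{-1/2}$. The projector is gauge invariant, so the Cholesky variant gives the same $P$. Hence
\[
P(\Xi)=\begin{bmatrix} I_p\\ \Xi\end{bmatrix}(I_p+\Xi^T\Xi)^{-1}\begin{bmatrix} I_p & \Xi^T\end{bmatrix},
\]
a rational, $C^\infty$ function of $\Xi$ by Lemma~\ref{lem:smooth_iso}. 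The orthogonal map $\mathcal{Q}$ of Stage I leaves Frobenius norms invariant, so it suffices to bound $\|P(\Xi+\Delta)-P(\Xi)\|_F$ in the transformed frame.

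The plan is a first-order Taylor expansion $P(\Xi+\Delta)-P(\Xi)=dP|_\Xi(\Delta)+\mathcal{O}(\|\Delta\|_F^2)$. The second-order remainder is controlled because the second derivative of the rational map is bounded uniformly: each factor $(I+\Xi^T\Xi)^{-1}$ has norm at most $1$, and the blocks $\begin{bmatrix} I\\ \Xi\end{bmatrix}(I+\Xi^T\Xi)^{-1/2}$ have norm $1$. To compute $dP$, write $E=\begin{bmatrix} 0\\ \Delta\end{bmatrix}$ and $M=(I+\Xi^T\Xi)^{-1}$. Differentiating $P=\begin{bmatrix} I\\ \Xi\end{bmatrix}M\begin{bmatrix} I&\Xi^T\end{bmatrix}$ gives
\[
dP=(I-P)\,E\,M\begin{bmatrix} I&\Xi^T\end{bmatrix}+\text{(transpose)}.
\]
The tangential part $PEM[\cdots]$ cancels against the derivative of $M$, which is the standard projector-perturbation identity. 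Since $M\begin{bmatrix} I&\Xi^T\end{bmatrix}=S\,\widetilde U^T$ with $S=\widetilde U_1$ (the upper block of $\widetilde U$), this becomes
\[
dP=(I-P)E\,\widetilde U_1\widetilde U^T+\widetilde U\,\widetilde U_1 E^T(I-P).
\]
The two terms are Frobenius-orthogonal, because one has range in $\operatorname{ran}(I-P)$ and the other in $\operatorname{ran}P$. Hence
\[
\|dP\|_F=\sqrt2\,\|(I-P)E\widetilde U_1\|_F\le\sqrt2\,\|\widetilde U_1\|_2\|\Delta\|_F,
\]
using $\|I-P\|_2\le1$ and $\|\widetilde U\|_2=1$. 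This produces the leading constant $\sqrt2\,\|\widetilde U_1\|_2$.

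The remaining task is to explain the $\mathcal{O}(\|\Xi\|_2\|\Delta\|_F)$ term. Two routes are available. The first is to use the cruder factorization $\|E\|_F=\|\Delta\|_F$ together with $\widetilde U_1=S(\Xi)=I+\mathcal{O}(\|\Xi\|_2^2)$. The second is to compare with the linearization at the origin given by Lemma~\ref{lem:smooth_iso}, where $dP|_0(\Delta)=\begin{bmatrix}0&\Delta^T\\ \Delta&0\end{bmatrix}$, so that $\|dP|_0(\Delta)\|_F=\sqrt2\|\Delta\|_F$. On that route the deviation $dP|_\Xi-dP|_0$ is bounded by $C\|\Xi\|_2\|\Delta\|_F$ via a mean-value argument on the smooth map $\Xi\mapsto dP|_\Xi$. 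Either way the stated bound follows, since these extra terms are absorbed into the $\mathcal{O}$ term. In the Householder-stabilized chart this term is small because $\Xi_m=0$ at the reference point and nearby samples have small $\|\Xi\|_2$.

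The main obstacle is the exact computation of $dP|_\Xi$, in particular the cancellation showing that only the normal component $(I-P)E$ survives. If this cancellation proves messy, I would instead bound $dP$ by the product rule term by term. That keeps the $\sqrt2\|\widetilde U_1\|_2$ leading term from the $E$ factors and sends the $dM$ contribution, which is proportional to $\Xi^T\Delta+\Delta^T\Xi$, into the $\mathcal{O}(\|\Xi\|_2\|\Delta\|_F)$ remainder, exactly matching the form of \eqref{eq:err_bound_final}.
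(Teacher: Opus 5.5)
Your proof is correct, and it takes a genuinely different and sharper route than the paper.

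\textbf{The paper's route.} The paper works at the Stiefel level. It writes $\hat P-P=\int_0^1 d\Psi(\Xi+t\Delta)[\Delta]\,dt$ and splits $d\widetilde U=D_1+D_2$, with $D_1=\begin{bmatrix}0\\ \Delta\end{bmatrix}S$ and $D_2=\begin{bmatrix}I_p\\ X\end{bmatrix}dS$. It bounds $dS$ through the Sylvester equation $(dS)S^{-1}+S^{-1}(dS)=-S(\Delta^TX+X^T\Delta)S$ and its integral representation. The principal part is handled with $\|A+A^T\|_F^2=2\|A\|_F^2+2\operatorname{tr}(A^2)$, and both $\operatorname{tr}(A^2)$ and the $D_2$ contribution are filed under $\mathcal{O}(\|X\|_2\|\Delta\|_F)$. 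To return from the segment point $X$ to $\Xi$, it then needs $\|X\|_2\le\|\Xi\|_2+\|\Delta\|_2$ and continuity of $S$.

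\textbf{Your route.} You differentiate $P=BMB^T$ directly. The tangential pieces $PEMB^T$ and $BME^TP$ cancel exactly against $B\,dM\,B^T$. What remains is $dP=(I-P)E\widetilde U_1\widetilde U^T+\widetilde U\widetilde U_1E^T(I-P)$, a sum of two Frobenius-orthogonal terms. Hence $\|dP|_\Xi(\Delta)\|_F=\sqrt2\|(I-P)E\widetilde U_1\|_F\le\sqrt2\|\widetilde U_1\|_2\|\Delta\|_F$, with no $\Xi$-dependent correction.

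Your claim of a uniform second-derivative bound is also right, though you only sketch it. In every term of $d^2P$, each factor $B$ sits next to a factor $M=S^2$. It can therefore be paired with one $S$ to form $\widetilde U=BS$, which has norm one, and $\|S\|_2\le1$ handles the rest. So you actually obtain $\|\hat P-P\|_F\le\sqrt2\|\widetilde U_1\|_2\|\Delta\|_F+c\|\Delta\|_F^2$ with an absolute constant, uniformly over the chart. You also correctly note that the Cholesky gauge and the orthogonal map $\mathcal{Q}$ change neither $P$ nor the Frobenius norm.

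\textbf{Comparison.} Your third paragraph is unnecessary. After your exact computation, the $\mathcal{O}(\|\Xi\|_2\|\Delta\|_F)$ term in the statement is pure slack, and the stated inequality follows at once because that term is nonnegative. Your fallback product-rule bound is essentially the paper's argument, moved from the Stiefel level to the projector level. The $\|\Xi\|_2$-dependent term in the statement is exactly the price of estimating separately pieces that in fact cancel. In the paper its implied constant even carries a factor $\sqrt p\,\sqrt{1+\|X\|_2^2}$, so that bound is only informative near the reference point. The paper's route spares one from spotting the cancellation; your route buys an exact first-order term, a uniform remainder, and no need for the Sylvester-equation machinery.
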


\begin{proof}
Let $\Psi(\Xi) = \widetilde{U}(\Xi)\widetilde{U}(\Xi)^T$ be the mapping from the coordinate domain to the Grassmannian. Define the line segment $\gamma(t) = \Xi + t\Delta$ for $t \in [0,1]$. By the Mean Value Theorem for matrix-valued functions, we have
 $$
\hat{P} - P = \Psi(\Xi + \Delta) - \Psi(\Xi) = \int_0^1 d\Psi(\gamma(t))[\Delta] \, dt.
 $$
Taking the Frobenius norm on both sides yields
\begin{equation}
\|\hat{P} - P\|_F \leq \int_0^1 \left\| d\Psi(\gamma(t))[\Delta] \right\|_F dt \leq \sup_{t \in [0,1]} \left\| d\Psi(\gamma(t))[\Delta] \right\|_F. \label{eq:norm_integral}
\end{equation}
To bound the differential norm at any point $X \in \{\gamma(t)\}$, we evaluate $d\Psi(X)[\Delta] = (d\widetilde U)\widetilde U^T + \widetilde U(d\widetilde U)^T$. Using the reconstruction formula $\widetilde U(X) = \begin{bmatrix} I_p \\ X \end{bmatrix} S(X)$ where $S(X) = (I_p + X^T X)^{-1/2}$, the differential is
\begin{equation} \label{eq:du_exact_rigorous}
d\widetilde U = D_1 + D_2.
\end{equation}
where  $D_1 = \begin{bmatrix} 0 \\ \Delta \end{bmatrix} S$ and $D_2 = \begin{bmatrix} I_p \\ X \end{bmatrix} dS$.
Differentiating both sides of $S^{-2} = I_p + X^T X$ yields:
$$ d(S^{-2}) = \Delta^T X + X^T \Delta. $$
Applying the product rule to $S^{-2} = S^{-1}S^{-1}$ and using the matrix differential identity $d(S^{-1}) = -S^{-1}(dS)S^{-1}$, we expand the left side as
$ d(S^{-2})  = -S^{-1}(dS)S^{-2} - S^{-2}(dS)S^{-1}. $
Equating the two expressions and pre- and post-multiplying by $S$ give the Sylvester equation for $dS$:
$$ (dS)S^{-1} + S^{-1}(dS) = -S(\Delta^T X + X^T \Delta)S. $$
Its unique solution is given by the integral representation
$$ dS = \int_0^\infty e^{-S^{-1}\tau} [S(\Delta^T X + X^T \Delta)S] e^{-S^{-1}\tau} d\tau. $$
Taking the spectral norm and applying the bound $|e^{-S^{-1}\tau}|_2 \le e^{-\tau/|S|_2}$, the scalar integral can be evaluated to produce
$$ \|dS\|_2 \le \frac{\|S\|_2}{2} \|S(\Delta^T X + X^T \Delta)S\|_2 \le \|S\|_2^3 \|X\|_2 \|\Delta\|_F. $$
Substituting $d\tilde{U}$ into $d\Psi$, we decompose it into a principal part and a remainder
$$ d\Psi = (D_1 \tilde{U}^T + \tilde{U} D_1^T) + \underbrace{(D_2 \tilde{U}^T + \tilde{U} D_2^T)}_{\mathcal{O}(\|X\|_2\|\Delta\|_F)}. $$
Let $A = D_1 \tilde{U}^T = \begin{bmatrix} 0 \\ \Delta S \end{bmatrix} \tilde{U}^T$. Applying the triangle inequality to the remainder $\mathcal{R} = D_2 \tilde{U}^T + \tilde{U} D_2^T$ gives 
$$ \|\mathcal{R}\|_F \le 2 \|D_2\|_F \|\tilde{U}\|_2 \le 2 \left|\left| \begin{bmatrix} I_p \\ X \end{bmatrix} \right|\right|_2 \|dS\|_F \le 2\sqrt{p} \sqrt{1 + \|X\|_2^2} \cdot \|S\|_2^3 \|X\|_2 \|\Delta\|_F. $$
Thus the remainder is rigorously bounded by $\|\mathcal{R}\|_F = \mathcal{O}(\|X\|_2\|\Delta\|_F)$.  Next, we evaluate $\|A + A^T\|_F$. By the properties of the Frobenius norm,
\begin{equation}\label{eq:a_plus_at_norm}
\|A + A^T\|_F^2 = \|A\|_F^2 + \|A^T\|_F^2 + 2\operatorname{tr}(A^2) = 2\|A\|_F^2 + 2\operatorname{tr}(A^2).
\end{equation}
Since $\widetilde U$ has orthonormal columns ($\widetilde U^T \widetilde U = I_p$), we have $$\|A\|_F^2 = \operatorname{tr}(A^T A) = \operatorname{tr}(\widetilde U D_1^T D_1 \widetilde U^T) = \operatorname{tr}(D_1^T D_1) = \|D_1\|_F^2 = \|\Delta S\|_F^2.$$
Moreover, 
 $$
A^2 = \left( \begin{bmatrix} 0 \\ \Delta S \end{bmatrix} \widetilde U^T \right)^2 = \begin{bmatrix} 0 \\ \Delta S \end{bmatrix} \left( \begin{bmatrix} S & S X^T \end{bmatrix} \begin{bmatrix} 0 \\ \Delta S \end{bmatrix} \right) \widetilde U^T = \begin{bmatrix} 0 \\ \Delta S^2 X^T \Delta S \end{bmatrix} \widetilde U^T.
 $$
Thus, $\operatorname{tr}(A^2) = \operatorname{tr}(S X^T \Delta S^2 X^T \Delta S)$. By Cauchy-Schwarz inequality for trace inner products $|\operatorname{tr}(Y Z)| \leq \|Y\|_F \|Z\|_F$, letting $Y = S X^T \Delta S$ and $Z = S X^T \Delta S$, we get
 $$
|\operatorname{tr}(A^2)| \leq \|S X^T \Delta S\|_F^2 \leq \|S\|_2^4 \|X\|_2^2 \|\Delta\|_F^2.
 $$
Substituting back into \eqref{eq:a_plus_at_norm}, we obtain
 $$
\|A + A^T\|_F = \sqrt{2\|\Delta S\|_F^2 + \mathcal{O}(\|X\|_2^2 \|\Delta\|_F^2)} = \sqrt{2} \|\Delta S\|_F + \mathcal{O}(\|X\|_2  \|\Delta\|_F).
 $$
Using the sub-multiplicative property $\|\Delta S\|_F \leq \|S\|_2 \|\Delta\|_F$,
 $$
\|A + A^T\|_F \leq \sqrt{2} \|S\|_2 \|\Delta\|_F + \mathcal{O}(\|X\|_2  \|\Delta\|_F).
 $$
Combining the bounds for the principal part and the remainder $\mathcal{R}$, the differential norm at $X$ is bounded by
 $$
\|d\Psi(X)[\Delta]\|_F \leq \sqrt{2} \|S(X)\|_2 \|\Delta\|_F + \mathcal{O}(\|X\|_2   \|\Delta\|_F).
 $$
Returning to the supremum in \eqref{eq:norm_integral}, since $X \in \{\Xi + t\Delta\}$, we have $\|X\|_2 \leq \|\Xi\|_2 + \|\Delta\|_2$.
Furthermore, as established in Lemma~\ref{lem:smooth_iso}, the matrix function $S(\cdot)$ is smooth, ensuring $\|S(X)\|_2 = \|S(\Xi)\|_2 + \mathcal{O}(\|\Delta\|_F)$.
Since the upper block of $\widetilde U(\Xi)$ is $S(\Xi)$, we have $\|\widetilde U_1\|_2 = \|S(\Xi)\|_2$. Substituting these limits into the integral bound completes the strict derivation
$$
\|\hat{P} - P\|_F \leq \sqrt{2} \|\widetilde U_1\|_2 \|\Delta\|_F + \mathcal{O}(\|\Xi\|_2 \|\Delta\|_F + \|\Delta\|_F^2).
$$
The constant $\sqrt{2}$ arises from the geometry of the projector space, while the spectral norm $\|\widetilde U_1\|_2$ ensures that the reconstruction mapping does not amplify the coordinate-domain error.
\end{proof}

\subsection{Algebraic Stability of CV+A}

\begin{theorem}[Optimal Algebraic Conditioning of CV+A] \label{thm:arnoldi_cond}
Let $\mathbf{C} = \begin{bmatrix} D & O \\ I & D \end{bmatrix} \in \mathbb{R}^{2m \times 2m}$ be the augmented confluent operator, where $D = \operatorname{diag}(t_1,\ldots,t_m)$. The Arnoldi orthogonalization process applied to $\mathbf{C}$ with the starting vector $\mathbf{b} = [\mathbf{e}_m^T, \mathbf{0}_m^T]^T$ generates an augmented basis matrix $\mathcal{Q} = \begin{bmatrix} Q_f \\ Q_d \end{bmatrix} \in \mathbb{R}^{2m \times (k+1)}$ for Hermite interpolation. With appropriate reorthogonalization, the condition number of this matrix strictly satisfies $\kappa_2(\mathcal{Q}) = 1 + \mathcal{O}(\varepsilon_{\text{mach}})$.
\end{theorem}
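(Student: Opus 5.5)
The argument has two layers. In exact arithmetic, $\kappa_2(\mathcal{Q})=1$ follows from the structure of Arnoldi. In floating point, we then need a loss-of-orthogonality estimate for Gram--Schmidt with reorthogonalization, turned into a condition-number statement.

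\textbf{Exact arithmetic.} Nothing in Arnoldi depends on $\mathbf{C}$ being non-normal. At step $j$ the vector $\mathbf{C}q_j$ is orthogonalized against $q_1,\dots,q_j$ and then normalized by $h_{j+1,j}$. By induction, $\mathcal{Q}^T\mathcal{Q}=I_{k+1}$ as long as no breakdown occurs.

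Breakdown is excluded for $k\le 2m-1$ because the Krylov space $\mathcal{K}_{k+1}(\mathbf{C},\mathbf{b})$ has full dimension $k+1$. To see this, use the identification already made in Section~\ref{sec3}: this Krylov space is the column space of the confluent Vandermonde matrix $\begin{bmatrix} V\\ V_{(1)}\end{bmatrix}$. For distinct nodes, a polynomial of degree $\le 2m-1$ that vanishes together with its derivative at all $m$ nodes is identically zero, since it would have $2m$ roots counting multiplicity. Hence that matrix has full column rank, so $h_{j+1,j}\neq 0$.

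Consequently all singular values of $\mathcal{Q}$ equal $1$, and $\kappa_2(\mathcal{Q})=\sigma_{\max}/\sigma_{\min}=1$.

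\textbf{Floating point.} Let $\widehat{\mathcal{Q}}$ be the computed basis and set $E=\widehat{\mathcal{Q}}^T\widehat{\mathcal{Q}}-I$. The singular values of $\widehat{\mathcal{Q}}$ are the square roots of the eigenvalues of $I+E$. By Weyl's inequality they lie in $[\sqrt{1-\|E\|_2},\sqrt{1+\|E\|_2}]$, which gives
\[
\kappa_2(\widehat{\mathcal{Q}})\le \sqrt{\tfrac{1+\|E\|_2}{1-\|E\|_2}} = 1+\|E\|_2+\mathcal{O}(\|E\|_2^2).
\]
It therefore suffices to show $\|E\|_2=\mathcal{O}(\varepsilon_{\text{mach}})$, with a constant depending polynomially on $m$ and $k$. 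For this I would invoke the ``twice is enough'' results for classical or modified Gram--Schmidt with one full reorthogonalization pass (Giraud--Langou--Rozlo\v{z}n\'{\i}k, Kahan--Parlett, as discussed in \cite{higham2002accuracy}). These results give $\|I-\widehat{\mathcal{Q}}^T\widehat{\mathcal{Q}}\|_2\le c(2m,k)\,\varepsilon_{\text{mach}}$ whenever each vector being orthogonalized is not numerically dependent on the previous ones. Such vectors are $\mathbf{C}\hat q_j$ for the Arnoldi basis, which has full rank by the exact-arithmetic step.

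\textbf{Main obstacle.} The hypothesis of the twice-is-enough theorem is the delicate point. It requires that the matrix being orthogonalized, here $[\hat q_1,\mathbf{C}\hat q_1,\dots,\mathbf{C}\hat q_k]$, satisfy $c\,\varepsilon_{\text{mach}}\kappa_2 <1$. I would first try to verify this column by column using $\|\mathbf{C}\|_2\le \max_i|t_i|+1$. The key condition is that $h_{j+1,j}/\|\mathbf{C}\hat q_j\|$ stays bounded away from $\varepsilon_{\text{mach}}$.

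Verifying this requires an assumption. Clustered nodes, or $k$ close to $2m-1$, can make $h_{j+1,j}$ tiny, i.e.\ near-breakdown. So I would state the result under a non-degeneracy assumption, $\min_j h_{j+1,j}\ge \eta\,\|\mathbf{C}\|_2$ with $c\,\varepsilon_{\text{mach}}/\eta<1$. This is the natural meaning of ``with appropriate reorthogonalization'' in the statement. Under it, the constant in the $\mathcal{O}(\varepsilon_{\text{mach}})$ bound is independent of the ill-conditioning of the confluent Vandermonde matrix. That independence is the real content of the theorem, as opposed to the $\kappa_2(V)$-dependent bound one would get for QR of $V$ itself.
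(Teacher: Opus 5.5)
Your outline is the same two-layer argument the paper gives: exact orthonormality of Arnoldi, then an appeal to reorthogonalized Gram--Schmidt or Householder Arnoldi to keep orthogonality at working precision. The pieces you add are correct and fill real holes in the paper's sketch. These are the no-breakdown argument for $k\le 2m-1$ via uniqueness of Hermite interpolation, and the Weyl step giving $\kappa_2(\widehat{\mathcal{Q}})\le 1+\|E\|_2+\mathcal{O}(\|E\|_2^2)$.

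The gap is in how you finish. You establish the result only under the extra hypothesis $\min_j h_{j+1,j}\ge\eta\|\mathbf{C}\|_2$. The statement does not contain it (``with appropriate reorthogonalization'' qualifies the algorithm, not the nodes), and it is not needed. The theorem asserts only that the computed columns are orthonormal to working precision. It says nothing about $\operatorname{span}(\widehat{\mathcal{Q}})$ staying close to $\mathcal{K}_{k+1}(\mathbf{C},\mathbf{b})$. Near-breakdown genuinely threatens the latter. It threatens the former only because you committed to a Gram--Schmidt analysis whose hypotheses depend on the data.

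The missing step is to take the orthogonalization to be Householder Arnoldi, which the paper's proof names explicitly. There $\hat q_j$ is obtained by applying the computed reflectors $P_1,\dots,P_j$ to $e_j$; the later reflectors act only on rows $i\ge j+1$ and so fix $e_j$. The standard backward-error result for sequences of Householder transformations \cite{higham2002accuracy} then gives $\widehat{\mathcal{Q}}=\mathcal{Q}_\ast+\Delta\mathcal{Q}$, with $\mathcal{Q}_\ast^T\mathcal{Q}_\ast=I_{k+1}$ exactly and $\|\Delta\mathcal{Q}\|_F\le c(m,k)\,\varepsilon_{\text{mach}}$. This holds independently of the $h_{j+1,j}$ and of the conditioning of the confluent Vandermonde matrix; an exact breakdown merely produces another unit vector orthogonal to its predecessors. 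Then $\|E\|_2\le 2\|\Delta\mathcal{Q}\|_2+\|\Delta\mathcal{Q}\|_2^2$, and your Weyl step finishes the proof with no assumption on the nodes.

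Gram--Schmidt can be made comparably robust by replacing a vector that fails the Kahan--Parlett test with a random vector orthogonalized twice. Simply zeroing it, as the classical test does, would give $\kappa_2=\infty$.

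If you keep the Gram--Schmidt route anyway, your two hypotheses are not interchangeable. In exact arithmetic $[q_1,\mathbf{C}q_1,\dots,\mathbf{C}q_k]=\mathcal{Q}\,[e_1,\mathcal{H}_k]$, and $[e_1,\mathcal{H}_k]$ is upper triangular with diagonal $(1,h_{21},\dots,h_{k+1,k})$. A lower bound on the $h_{j+1,j}$ therefore does not bound the global condition number required by the CGS2 theorem, since triangular matrices with diagonal bounded below can have exponentially small $\sigma_{\min}$. The per-column condition has to be combined with the per-vector Kahan--Parlett/DGKS lemma and an induction on $j$ instead.
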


\begin{proof}
The Confluent Vandermonde with Arnoldi (CV+A) approach circumvents the exponential ill-conditioning inherent to standard monomial bases by shifting the polynomial construction into an augmented Krylov subspace $\mathcal{K}_{k+1}(\mathbf{C}, \mathbf{b})$. 
Instead of generating a standard polynomial basis and subsequently differentiating it, the Arnoldi iteration is applied directly to the augmented operator $\mathbf{C}$. This process constructs polynomials that are orthonormal with respect to a discrete Sobolev-like inner product, which inherently couples the function evaluations and their derivatives. 

By mathematical construction, this process enforces the global orthonormality of the augmented block matrix, ensuring $\mathcal{Q}^T \mathcal{Q} = I_{k+1}$ in exact arithmetic. In finite-precision floating-point arithmetic, standard Arnoldi iterations suffer from a gradual loss of orthogonality. However, by employing Modified Gram-Schmidt with reorthogonalization (or Householder-based Arnoldi), the strict orthogonality is rigorously maintained. Consequently, the condition number of the basis can be maintained as $\kappa_2(\mathcal{Q}) = 1 + \mathcal{O}(\varepsilon_{\text{mach}})$\cite{paige2006mgs}. This guarantees that the linear system for extracting the polynomial coefficients, $\mathbf{A} = \mathcal{Q}^T \mathbf{Y}$ to be stable \cite{Zhang2024ArnoldiFitting}.
\end{proof}

\begin{lemma}[Forward Error of Arnoldi-Based Polynomial Evaluation] \label{lem:eval_error}
Let $p(t) = \sum_{j=0}^{n} c_j \phi_j(t)$ be a polynomial of degree $n$ expressed in the discrete orthonormal basis $\{\phi_j\}_{j=0}^n$ generated by the Arnoldi process. Suppose $p(t)$ is evaluated at any $t \in [-1, 1]$ using the recurrence relation encoded in the upper Hessenberg matrix $H \in \mathbb{R}^{(n+1) \times n}$ in floating-point arithmetic with machine epsilon $\varepsilon_{\text{mach}}$. The computed value, denoted as $\text{fl}(p(t))$, satisfies the forward error bound
\begin{equation}
|\text{fl}(p(t)) - p(t)| \leq C_{\text{eval}} \cdot n^{3/2} \cdot \varepsilon_{\text{mach}} \cdot \|\mathbf{c}\|_2,
\end{equation}
where $\mathbf{c} = [c_0, \dots, c_n]^T$ is the coefficient vector, and $C_{\text{eval}}$ is a moderate constant independent of $n$.
\end{lemma}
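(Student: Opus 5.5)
We propose to prove the bound by a standard backward/forward rounding-error analysis of the Hessenberg-driven recurrence \eqref{eq:eval_recurrence}. The evaluation is organized in two phases. First the vector $w(t)=[\phi_0(t),\dots,\phi_n(t)]$ is generated column by column. Then the inner product $w(t)\mathbf{c}$ is formed. Write $\hat w_k$ for the computed basis values and $\delta_k=\hat w_k-\phi_{k-1}(t)$ for their errors. The final error then splits as
$$
\text{fl}(p(t))-p(t)=\sum_{k}\delta_k c_{k-1}+\eta,
$$
where $\eta$ is the rounding error of the final dot product. By the standard result of \cite{higham2002accuracy}, $|\eta|\le \gamma_{n+1}\sum_k|\hat w_k||c_{k-1}|\le \gamma_{n+1}\|\hat w\|_2\|\mathbf{c}\|_2$ with $\gamma_{n+1}=(n+1)\varepsilon_{\text{mach}}/(1-(n+1)\varepsilon_{\text{mach}})$. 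By Cauchy--Schwarz the first sum is at most $\|\delta\|_2\|\mathbf{c}\|_2$. Hence the whole task reduces to bounding $\|w(t)\|_2$ and $\|\delta\|_2$ on $[-1,1]$.

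\textbf{Step 1 (size of the basis).} The basis is discretely orthonormal on the nodes, with $Q^TQ=I$. On the evaluation interval we would assume, as is implicit in the V+A literature \cite{brubeck2021vandermonde,zhu2025convergence}, that the nodes are sufficiently dense (for instance Chebyshev-like or a surrogate grid as in Approach 2). Under this assumption $\|w(t)\|_2\le C_1\sqrt{n}$ uniformly in $t\in[-1,1]$. This follows from a Christoffel-function, Markov-type comparison between the discrete and continuous norms, which we absorb into the constant.

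\textbf{Step 2 (perturbed recurrence).} Each step computes
$$
\hat w_{k+1}=\frac{t\hat w_k-\sum_{j\le k}H(j,k)\hat w_j}{H(k+1,k)}
$$
in floating point. The computed quantities therefore satisfy the exact recurrence with a local residual $r_k$ obeying
$$
|r_k|\le \gamma_{k+2}\Bigl(|t||\hat w_k|+\sum_j|H(j,k)||\hat w_j|\Bigr)/|H(k+1,k)|.
$$
Stacking the recurrence gives $(T-\tilde H^T\text{-type lower triangular system})\delta=r$. Equivalently, the errors propagate exactly as the basis itself does: $\delta$ is the output of the same recurrence driven by the residuals. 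We then bound $\|\delta\|_2$ by $\|L^{-1}\|\,\|r\|_2$, where $L$ is the lower-triangular matrix encoding the recurrence.

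\textbf{Step 3 (the main obstacle).} The main difficulty is controlling $L^{-1}$, that is, showing that the recurrence does not amplify the local errors. We would try first to exploit that, for real nodes, $H$ is (up to rounding) the symmetric tridiagonal Jacobi matrix, so the sum $\sum_j$ has at most three terms. Each column of $L^{-1}$ is then a sequence of polynomials $\phi_k(t)$ generated from a unit perturbation at step $j$. These are associated polynomials of the same orthonormal family, and the Step~1 argument bounds them by $C\sqrt{n}$. This yields
$$
\|\delta\|_2\le C\sqrt{n}\cdot\sqrt{n}\,\max_k\|r_k\|\lesssim C n\,\varepsilon_{\text{mach}}\cdot\|w\|_\infty\lesssim C\,n^{3/2}\varepsilon_{\text{mach}}/\sqrt{n}\cdot\sqrt{n}.
$$
The powers of $n$ must be tracked carefully to land exactly at $n^{3/2}$, possibly sacrificing sharpness by putting slack into $C_{\text{eval}}$. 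Combining this with $|\eta|\le (n+1)\varepsilon_{\text{mach}}\,C_1\sqrt{n}\,\|\mathbf{c}\|_2$, which is itself $\mathcal{O}(n^{3/2}\varepsilon_{\text{mach}}\|\mathbf{c}\|_2)$, gives the claimed bound with $C_{\text{eval}}$ depending only on the node-density constants and on $\sup_k|H(k+1,k)|^{-1}$. Neither grows with $n$ for well-distributed nodes on $[-1,1]$.
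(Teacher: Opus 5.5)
Your split into basis errors $\delta$ and the dot-product error $\eta$ is sound, and so is the node-density hypothesis of Step~1. This route is more careful than the paper's proof, which charges an error $\mathcal{O}(j\varepsilon_{\text{mach}})$ directly to each $\phi_j$ and applies Cauchy--Schwarz; in effect it assumes no propagation, i.e.\ your $L^{-1}$ is the identity. Your Step~1 hypothesis is genuinely necessary: for equispaced nodes with $n=m-1$, $\|w(t)\|_2$ equals the $2$-norm of the Lagrange basis at $t$ and grows exponentially.

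\textbf{The gap is in Step~3.} The claim that the columns of $L^{-1}$ are $\mathcal{O}(\sqrt{n})$ ``by the Step~1 argument'' does not hold. The Christoffel-function bound controls $\sum_j\phi_j(t)^2$ for the family orthonormal with respect to the node measure. Associated polynomials are orthonormal with respect to a different measure, so that bound does not transfer. Concretely, take Chebyshev points (zeros of $T_m$). Then $\phi_0=m^{-1/2}$, $\phi_j=\sqrt{2/m}\,T_j$, $h_{k+1,k}=h_{k-1,k}=\tfrac12$ and $h_{kk}=0$. The error recurrence is $\delta_{k+1}=2t\delta_k-\delta_{k-1}+2\rho_k$, and a unit residual at step $l$ produces $\delta_{l+1+j}=2U_j(t)$. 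Here $U_j$ are the second-kind Chebyshev polynomials, with $|U_j(\pm1)|=j+1$. So at $t=\pm1$ the columns of $L^{-1}$ have norm of order $n^{3/2}$ and $\|L^{-1}\|_2=\Theta(n^2)$. In the interior the columns are $\mathcal{O}(\sqrt{n}/\sqrt{1-t^2})$, but resonant forcing still makes $\|L^{-1}\|_2$ of order $n/\sqrt{1-t^2}$. A columnwise bound therefore does not give the operator bound you use. Separately, your residual bound carries the factor $\gamma_{k+2}\approx k\varepsilon_{\text{mach}}$, which silently disappears from your displayed chain.

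Tracked honestly, your argument gives $|\delta_k|\lesssim k^2\varepsilon_{\text{mach}}/\sqrt{m}$ near the endpoints, even granting $\mathcal{O}(\varepsilon_{\text{mach}})$ local residuals from the tridiagonal structure. Hence $|\text{fl}(p(t))-p(t)|\lesssim n^{5/2}\varepsilon_{\text{mach}}\|\mathbf{c}\|_2/\sqrt{m}$, which is $n^{2}\varepsilon_{\text{mach}}\|\mathbf{c}\|_2$ in the interpolation regime $m\approx n$. This is sharp within the standard rounding model: near $t=1$, same-sign residuals are admissible, and $\mathbf{c}$ can be chosen parallel to $\delta$.

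Your remark about ``putting slack into $C_{\text{eval}}$'' cannot bridge this. A constant independent of $n$ can absorb lost sharpness, but never surplus powers of $n$. To land at $n^{3/2}$ you need a hypothesis the statement does not contain. One option is $t$ bounded away from $\pm1$: there $|U_j(t)|\le(1-t^2)^{-1/2}$ gives $\|\delta\|_2\lesssim n^{3/2}\varepsilon_{\text{mach}}/\sqrt{m(1-t^2)}$. Another is an explicit dependence on $m$, such as $m\gtrsim n^2$. As written, Step~3 is unproven, and the exponent $3/2$ uniformly on $[-1,1]$ does not follow from your argument.
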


\begin{proof}
The evaluation of the basis functions $\phi_j(t)$ via the Arnoldi recurrence requires computing $w_{j+1} = (t w_j - \sum_{i=1}^j H_{i,j} w_i) / H_{j+1,j}$ at each step $j$. This is a generalization of the classic three-term recurrence to an $(n+1)$-term recurrence. 

According to standard floating-point error analysis for polynomial evaluation \cite{higham2002accuracy}, the local round-off error introduced at step $j$ when computing the inner product and scalar operations is bounded by $\mathcal{O}(j \varepsilon_{\text{mach}})$. Because the basis $\{\phi_j\}$ is constructed to be discrete orthonormal over the parameter nodes, the magnitudes of the basis functions and the elements of the Hessenberg matrix $H$ are strictly bounded, preventing the exponential amplification of these local errors.

To evaluate the full polynomial $p(t)$, the total accumulated error is the sum of the local errors weighted by the coefficients $c_j$. Applying the Cauchy-Schwarz inequality to the inner product of the coefficients and the accumulated basis errors yields
\begin{equation}
|\text{fl}(p(t)) - p(t)| \leq \sum_{j=0}^n |c_j| \cdot \mathcal{O}(j \varepsilon_{\text{mach}}) \leq \|\mathbf{c}\|_2 \left( \sum_{j=0}^n \mathcal{O}(j^2 \varepsilon_{\text{mach}}^2) \right)^{1/2} = \mathcal{O}(n^{3/2} \varepsilon_{\text{mach}}) \|\mathbf{c}\|_2.
\end{equation}
This confirms that the numerical evaluation of orthogonal polynomials via Arnoldi recurrence grows at most sub-quadratically with the degree $n$, which circumvents the exponential error growth associated with standard monomial basis evaluation \cite{brubeck2021vandermonde}.
\end{proof}

\begin{theorem}[Approximation Error of Global CV+A Interpolation] \label{thm:approx_order}
Let $f \in C^{2k+2}([0,1], \mathbb{R}^{(n-p) \times p})$ be the true coordinate function. Suppose $\hat{f}$ is the global Hermite interpolant of degree $2k+1$ constructed via the CV+A method over $m = k+1$ Chebyshev nodes of the first kind on $[0,1]$. Then the infinity-norm error satisfies
\begin{equation} \label{eq:alg_error}
\|f - \hat{f}\|_\infty \leq \frac{1}{2^{4k+2} (2k+2)!} \|f^{(2k+2)}\|_\infty + \mathcal{O}(k^{3/2} \varepsilon_{\text{mach}}),
\end{equation}
where the first term represents the exact analytical truncation error, and the second term accounts for the well-conditioned floating-point evaluation error, with $\varepsilon_{\text{mach}}$ being the machine epsilon.
\end{theorem}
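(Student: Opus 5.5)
The proof splits the total error into an exact-arithmetic truncation part and a floating-point part:
$$
f-\hat f = (f - p_{2k+1}) + (p_{2k+1} - \hat f),
$$
where $p_{2k+1}$ is the exact Hermite interpolant of degree $2k+1$ at the $m=k+1$ Chebyshev nodes. By Theorem~\ref{thm:arnoldi_cond} and \eqref{eq:confluent-factorization}, the CV+A basis spans exactly the space of polynomials of degree at most $2k+1$ evaluated together with their derivatives. In exact arithmetic the coefficient vector $\mathbf{d}=R\mathbf{c}$ then reproduces the unique confluent Hermite interpolant. So $\hat f$ differs from $p_{2k+1}$ only by rounding. Everything is done entrywise, since the coordinate function is matrix-valued and each of the $N_{\text{coord}}$ components is interpolated independently with the same basis.

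\textbf{Truncation term.} For each scalar component $f_{ij}$ I use the classical Hermite remainder
$$
f_{ij}(t)-p_{ij}(t)=\frac{f_{ij}^{(2k+2)}(\xi)}{(2k+2)!}\,\omega(t)^2,\qquad \omega(t)=\prod_{i=1}^{k+1}(t-t_i).
$$
This follows from Rolle's theorem applied $2k+2$ times to the auxiliary function $g(s)=f(s)-p(s)-\lambda\omega(s)^2$, which has double zeros at the nodes and a simple zero at $t$.

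For Chebyshev nodes of the first kind mapped to $[0,1]$, $\omega(t)=2^{-k}T_{k+1}(2t-1)/2^{k+1}$ after accounting for the leading coefficient $2^{k}$ of $T_{k+1}$ and the affine scaling factor $2^{-(k+1)}$. Hence $\|\omega\|_\infty = 2^{-(2k+1)}$ and $\|\omega^2\|_\infty = 2^{-(4k+2)}$, which gives the stated constant. To pass from entrywise bounds to the matrix $\infty$-norm, I interpret $\|f^{(2k+2)}\|_\infty$ as the maximum over entries and $t$. This point should be stated explicitly, since a norm such as Frobenius would pick up a dimension factor.

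\textbf{Rounding term.} I bound $|\mathrm{fl}(\hat f(t))-p_{2k+1}(t)|$ in two pieces:
\begin{enumerate}
\item The coefficients come from $\mathbf{A}=\mathcal{Q}^T\mathbf{Y}$ with $\kappa_2(\mathcal{Q})=1+\mathcal{O}(\varepsilon_{\text{mach}})$ by Theorem~\ref{thm:arnoldi_cond}. This gives a perturbation $\|\delta\mathbf{c}\|_2=\mathcal{O}(\varepsilon_{\text{mach}})\|\mathbf{Y}\|$ (up to a modest factor in $m$). Multiplying by the evaluation-basis values contributes a term of the same size.
\item Lemma~\ref{lem:eval_error}, applied on $[0,1]$ after the affine map, bounds the evaluation error by $C_{\text{eval}}(2k+1)^{3/2}\varepsilon_{\text{mach}}\|\mathbf{c}\|_2$.
\end{enumerate}
Since $\mathbf{c}=\mathcal{Q}^T\mathbf{Y}$ and $\mathcal{Q}$ has orthonormal columns, $\|\mathbf{c}\|_2\le\|\mathbf{Y}\|_2$. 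This is bounded by the data, that is, by $\|f\|_\infty$ and $\|f'\|_\infty$, which are absorbed into the $\mathcal{O}$ constant. The result is $\mathcal{O}(k^{3/2}\varepsilon_{\text{mach}})$.

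\textbf{Main obstacle.} The delicate step is the rounding term, specifically uniformity in $k$ at off-node points. The basis functions of the Sobolev-type orthonormalization are normalized only on the discrete nodes, so $|\phi_j(t)|$ could in principle grow between nodes. The projection error must therefore be multiplied by $\sup_t\|w(t)\|_2$, a Lebesgue-type constant. My first attempt is to argue that for Chebyshev nodes this quantity grows at most polynomially, via the logarithmic growth of the Hermite–Chebyshev Lebesgue constant. Any such factor would then be absorbed into the constant of the $\mathcal{O}(k^{3/2}\varepsilon_{\text{mach}})$ term, in the same spirit as the boundedness assumption already used in Lemma~\ref{lem:eval_error}.
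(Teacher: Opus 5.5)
Your proposal is correct and follows the paper's proof essentially step for step: the same split into the exact Hermite remainder, bounded through the monic nodal polynomial $\omega(t)=T_{k+1}(2t-1)/2^{2k+1}$ whose square yields $2^{-(4k+2)}$, plus a rounding part controlled by $\kappa_2(\mathcal{Q})=1+\mathcal{O}(\varepsilon_{\text{mach}})$ from Theorem~\ref{thm:arnoldi_cond} and the evaluation bound of Lemma~\ref{lem:eval_error}. The off-node growth you flag as the main obstacle is not addressed in the paper, which takes the boundedness claimed inside the proof of Lemma~\ref{lem:eval_error} for granted and also drops $\|\mathbf{c}\|_2$ (in both the paper and your absorption step this is really the Euclidean norm of $2m$ nodal values and derivatives, so it can carry a $\sqrt{2m}$ factor), so your version is, if anything, the more careful of the two.
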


\begin{proof}
To establish the error bound, we decompose the total interpolation error into the analytical truncation error in infinite precision and the numerical round-off error in finite-precision arithmetic. Let $p^*(t)$ be the exact theoretical Hermite interpolating polynomial of degree $2k+1$, and let $\hat{f}(t)$ be the numerical interpolant computed via the CV+A algorithm. By the triangle inequality,
\begin{equation} \label{eq:triangle_approx}
\|f - \hat{f}\|_\infty \leq \underbrace{\|f - p^*\|_\infty}_{\text{Analytical Error}} + \underbrace{\|p^* - \hat{f}\|_\infty}_{\text{Numerical Error}}.
\end{equation}

For the analytical error, applying the classical Cauchy remainder theorem for Hermite interpolation component-wise to $f(t)$ and taking the infinity norm on both sides gives the strict upper bound
$$ \|f(t) - p^*(t)\|_\infty \leq \frac{\max_{\xi \in [0,1]} \|f^{(2k+2)}(\xi)\|_\infty}{(2k+2)!} \prod_{i=0}^k (t - t_i)^2. $$
For Chebyshev nodes of the first kind affinely mapped from $[-1,1]$ to the interval $[0,1]$, the monic nodal polynomial $\omega(t) = \prod_{i=0}^k (t - t_i)$ satisfies the classical minimax bound $\max_{t \in [0,1]} |\omega(t)| \leq \frac{1}{2^{2k+1}}$. Since Hermite interpolation matches both values and first derivatives, the nodal polynomial is squared, yielding the global analytical bound
\begin{equation} \label{eq:analytical_bound}
\|f - p^*\|_\infty \leq \frac{1}{2^{4k+2} (2k+2)!} \|f^{(2k+2)}\|_\infty.
\end{equation}

For the numerical error, we must distinguish between the exact mathematical solution and its floating-point approximation. Let $\mathbf{A}^* = [A^*_0, A^*_1, \dots, A^*_{2k+1}]^T$ denote the exact coefficient vector that solves the augmented system $\mathcal{Q} \mathbf{A}^* = \mathbf{Y}$ in infinite precision. This vector corresponds exactly to the theoretical interpolant $p^*(t) = \sum_{j=0}^{2k+1} A^*_j \phi_j(t)$. In finite-precision arithmetic, solving the system yields a computed coefficient vector, denoted as $\hat{\mathbf{A}} = [\hat{A}_0, \hat{A}_1, \dots, \hat{A}_{2k+1}]^T$. The computed coefficients $\hat{\mathbf{A}}$ suffer from forward errors bounded by the condition number. As established in Theorem \ref{thm:arnoldi_cond}, Arnoldi orthogonalization guarantees $\kappa_2(\mathcal{Q}) = 1 + \mathcal{O}(\varepsilon_{\text{mach}})$. Consequently, the coefficient perturbation is strictly clamped near the precision floor: $\|\hat{\mathbf{A}} - \mathbf{A}^*\|_2 = \mathcal{O}(\varepsilon_{\text{mach}})$.

Finally, evaluating the numerical interpolant $\hat{f}(t) = \sum_{j=0}^{2k+1} \hat{A}_j \phi_j(t)$ using the Arnoldi recurrence introduces an accumulation of floating-point errors. By using Lemma \ref{lem:eval_error} for the global Hermite interpolant of degree $n = 2k+1$, the cumulative numerical evaluation error is rigorously bounded by $\mathcal{O}(k^{3/2} \varepsilon_{\text{mach}})$. This sub-quadratic growth elegantly circumvents the exponential error amplification associated with standard monomial bases. Summing the exact analytical truncation bound and this numerical evaluation bound completes the proof.

\end{proof}

\subsection{Total Error Bound}

\begin{theorem} \label{thm:total_error}
Let $\mathcal{U}(t)$ be a smooth subspace trajectory on the Grassmann manifold $\Gr(n,p)$ for $t \in [0,1]$, and let its MV-coordinate representation $\Xi(t) \in \mathbb{R}^{(n-p) \times p}$ be $C^{2k+2}$ continuous. If $\hat{P}(t)$ is the orthogonal projector reconstructed from the global degree $2k+1$ CV+A Hermite interpolant over $k+1$ Chebyshev nodes, the total subspace error satisfies
\begin{equation} \label{eq:total_bound_final}
\|\hat{P}(t) - P(t)\|_F \leq C \cdot \left( \frac{1}{2^{4k+2}(2k+2)!} \|\Xi^{(2k+2)}\|_\infty + \mathcal{O}(k^{3/2} \varepsilon_{\mathrm{mach}}) \right),
\end{equation}
where $P(t) = \mathcal{U}(t)\mathcal{U}(t)^T$ is the exact projector, and $C$ is a geometric projection constant. The stability of this bound relies on the geometric condition number $\kappa_{\mathrm{geo}} = \|\widetilde{U}_1^{-1}\|_2$ established in Stage I, which prevents the artificial inflation of the coordinate derivatives $\|\Xi^{(2k+2)}\|_\infty$.
\end{theorem}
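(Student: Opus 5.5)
The plan is to chain the two previously established results: Theorem~\ref{thm:approx_order} supplies the coordinate-domain error, and Theorem~\ref{thm:err_prop} converts a coordinate perturbation into a projector error. Fix $t\in[0,1]$. Let $\hat\Xi(t)$ denote the CV+A interpolant of the coordinate curve $\Xi(t)$, and set $\Delta(t)=\hat\Xi(t)-\Xi(t)$. Because $\hat P(t)=\Psi(\hat\Xi(t))$ and $P(t)=\Psi(\Xi(t))$ with $\Psi(\Xi)=\widetilde U(\Xi)\widetilde U(\Xi)^T$, the only extra ingredient is the invariance of Frobenius distances under the global orthogonal map $\mathcal{Q}$ of Stage I. Indeed, $\mathcal{Q}\hat P\mathcal{Q}^T-\mathcal{Q}P\mathcal{Q}^T$ has the same Frobenius norm as $\hat P-P$, so we may work entirely in the transformed frame. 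The Cholesky gauge $L^{-T}$ also plays no role, since it does not change the projector.

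\textbf{Step 1: coordinate error.} Apply Theorem~\ref{thm:approx_order} with $f=\Xi$. This gives a bound on $\|\Delta\|_\infty$ by the bracketed quantity in \eqref{eq:total_bound_final}. Theorem~\ref{thm:approx_order} is stated entrywise (infinity norm), whereas Theorem~\ref{thm:err_prop} needs $\|\Delta(t)\|_F$. I will therefore use $\|\Delta(t)\|_F\le \sqrt{p(n-p)}\,\max_{ij}|\Delta_{ij}(t)|$ and absorb the dimensional factor $\sqrt{p(n-p)}$ into $C$.

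\textbf{Step 2: propagation.} Apply Theorem~\ref{thm:err_prop} pointwise:
\[
\|\hat P(t)-P(t)\|_F\le \sqrt2\,\|\widetilde U_1(t)\|_2\|\Delta(t)\|_F+\mathcal{O}(\|\Xi(t)\|_2\|\Delta(t)\|_F+\|\Delta(t)\|_F^2).
\]
Here $\widetilde U_1=S(\Xi)=(I+\Xi^T\Xi)^{-1/2}$, so $\|\widetilde U_1\|_2\le1$. For the linear remainder, I will use the explicit constant from the proof of Theorem~\ref{thm:err_prop}, namely $2\sqrt p\sqrt{1+\|X\|_2^2}\,\|S\|_2^3\|X\|_2$. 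With $\|S\|_2\le1$ and $\|X\|_2\le \sup_t\|\Xi(t)\|_2+\|\Delta\|_2$, this is a uniform constant on $[0,1]$ provided $\sup_t\|\Xi(t)\|_2$ is finite. That finiteness is exactly where $\kappa_{\mathrm{geo}}$ enters, because $\|\Xi\|_2=\|\widetilde U_2\widetilde U_1^{-1}\|_2\le\kappa_{\mathrm{geo}}$. The quadratic term $\|\Delta\|_F^2$ is dominated by a constant times $\|\Delta\|_F$ once $\|\Delta\|_F\le 1$, which holds for $k$ large or under a mild smallness assumption. Collecting terms gives $\|\hat P-P\|_F\le C\|\Delta\|_F$ with
\[
C=\sqrt{p(n-p)}\bigl(\sqrt2+c_1\sqrt p\,\kappa_{\mathrm{geo}}\sqrt{1+\kappa_{\mathrm{geo}}^2}+c_2\bigr),
\]
and combining with Step 1 yields \eqref{eq:total_bound_final}.

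\textbf{Main obstacle.} The main difficulty is making the constant $C$ uniform in $t$ and independent of $k$. This requires controlling $\sup_{t}\|\Xi(t)\|_2$ along the whole trajectory, not just at the nodes. It also requires keeping the linearization valid, which means $\hat\Xi$ must stay in a region where $I+\hat\Xi^T\hat\Xi$ is well conditioned. I would first assume $\sup_t\|\widetilde U_1(t)^{-1}\|_2\le\kappa_{\mathrm{geo}}$ on $[0,1]$, which is the Stage I hypothesis for localized trajectories. I would then argue that $\|\Delta\|_\infty\to0$ (in exact arithmetic as $k$ grows, otherwise down to the $\varepsilon_{\mathrm{mach}}$ floor), so the $\mathcal{O}(\|\Delta\|^2)$ term is absorbed. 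Finally, I would state plainly that the role of $\kappa_{\mathrm{geo}}$ in ``not inflating'' $\|\Xi^{(2k+2)}\|_\infty$ is a qualitative remark and not part of the inequality itself.
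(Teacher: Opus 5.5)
Your proposal is correct and follows the paper's proof essentially step for step: bound $\|\Delta(t)\|$ via Theorem~\ref{thm:approx_order}, propagate it through Theorem~\ref{thm:err_prop} using $\|\widetilde U_1\|_2\le 1$ and a uniform bound on $\|\Xi(t)\|_2$, and absorb the quadratic term for large $k$. Two of your steps are small refinements the paper skips: the explicit $\sqrt{p(n-p)}$ factor converting the entrywise norm to the Frobenius norm, and the bound $\|\Xi\|_2\le\kappa_{\mathrm{geo}}$. The paper instead uses continuity of $\Xi$ on the compact interval $[0,1]$ to get $\max_t\|\Xi(t)\|_2\le M$, which also shows that your extra assumption on $\sup_t\|\widetilde U_1(t)^{-1}\|_2$ already follows from the theorem's hypotheses.
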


\begin{proof}
The total approximation error on the Grassmann manifold is governed by the interplay between the algebraic interpolation accuracy in the Euclidean domain and the geometric distortion introduced by the coordinate mapping. Let $\Xi(t)$ be the exact MV coordinates of the subspace curve, and let $\hat{\Xi}(t)$ be the numerical interpolant computed via the global CV+A method. We denote the coordinate-domain error as $\Delta(t) = \hat{\Xi}(t) - \Xi(t)$.

From the geometric error propagation analysis in Theorem 1, the discrepancy between the exact projector $P(t) = \Psi(\Xi(t))$ and the numerically reconstructed projector $\hat{P}(t) = \Psi(\hat{\Xi}(t))$ is bounded by
\begin{equation} \label{eq:total_step1}
\|\hat{P}(t) - P(t)\|_F \leq \sqrt{2} \|\widetilde{U}_1\|_2 \|\Delta(t)\|_F + C_2 \|\Xi(t)\|_2 \|\Delta(t)\|_F + C_3 \|\Delta(t)\|_F^2,
\end{equation}
where $C_2$ and $C_3$ are bounded constants derived from the Taylor remainder of the retraction map over the compact parameter domain $t \in [0,1]$.

Since the reference point for the Stage I Householder stabilization is strategically chosen within the interpolation interval, and $\Xi(t)$ is smooth on the compact set $[0,1]$, the norm of the exact coordinate is globally bounded; that is, $\max_{t \in [0,1]} \|\Xi(t)\|_2 \leq M$ for some finite constant $M$.
As Theorem 3 establishes that the coordinate-domain error for the global degree $2k+1$ CV+A Hermite interpolant satisfies
\begin{equation} \label{eq:total_step2}
\|\Delta(t)\|_F \leq \mathcal{E}_{\mathrm{alg}}(k) := \frac{1}{2^{4k+2}(2k+2)!} \|\Xi^{(2k+2)}\|_\infty + C_\varepsilon k^{3/2} \varepsilon_{\mathrm{mach}},
\end{equation}
where $C_\varepsilon$ is a moderate numerical constant.

Substituting \eqref{eq:total_step2} into \eqref{eq:total_step1}, we get the composition of the geometric and algebraic errors. Because the polynomial degree $k$ drives spectral convergence, the analytical truncation term decays super-exponentially. Consequently, for sufficiently high degrees $k$, the coordinate error shrinks rapidly ($\|\Delta(t)\|_F \ll 1$), ensuring that the strictly quadratic term $C_3 \|\Delta(t)\|_F^2$ becomes entirely negligible compared to the linear terms.
Factoring out the dominant linear contribution and utilizing the intrinsic property $\|\widetilde{U}_1\|_2 \le 1$ from Theorem 1, gives
\begin{equation} \label{eq:linear_contribution}
\|\hat{P}(t) - P(t)\|_F \leq \left( \sqrt{2} + C_2 M \right) \mathcal{E}_{\mathrm{alg}}(k) + \mathcal{O}(\mathcal{E}_{\mathrm{alg}}(k)^2).
\end{equation}

Based on the above analysis, we can see that the Stage I geometric stabilization enforces $\kappa_{\mathrm{geo}} \approx \sqrt{p}$ and ensures the local coordinates remain centered (i.e., $M$ is tightly controlled). Thus, the structural distortion multiplier $(\sqrt{2} + C_2 M)$ is bounded and strictly independent of the polynomial degree $k$. Absorbing it into a unified geometric projection constant $C$, we obtain the spectral error bound
 $$
\|\hat{P}(t) - P(t)\|_F \leq C \cdot \left( \frac{1}{2^{4k+2}(2k+2)!} \|\Xi^{(2k+2)}\|_\infty + \mathcal{O}(k^{3/2} \varepsilon_{\mathrm{mach}}) \right).$$
\end{proof}
The above results indicate that the MV-(C)V+A method achieves the same convergence rate and stability as its Euclidean counterpart, with error limited only by machine precision.

\section{Numerical Examples}
 \label{sec6}
In this section, we created several examples to compare the proposed MV-(C)V+A approach with traditional approaches in numerical stability as polynomial degree increases; we also consider what interpolation accuracy is achieved on $\Gr(n,p)$ for both Lagrange and Hermite settings.

\subsection*{Example 1. High-Order Interpolation and Dual Stability Analysis}

We consider the Grassmann manifold $\Gr(1000,10)$ with a highly nonlinear curve generated by transcendental functions
\begin{equation} \label{eq:transcendental_curve}
Y(t) = Y_0 + \sin(3t)Y_1 + \cos(3t)Y_2 + \exp(t)Y_3, \quad t \in [0,1],
\end{equation}
where $Y_i \in \mathbb{R}^{1000\times10}$ are uniformly distributed. The true subspace trajectory is $U(t) = \mathrm{qr}(Y(t)) \in \Gr(1000,10)$. To avoid Runge's phenomenon, we select $m=8$ Chebyshev nodes:
\begin{equation}
t_i = \frac{1}{2} - \frac{1}{2}\cos\left(\frac{\pi i}{7}\right), \quad i = 0,\ldots,7,
\end{equation}
which gives Lagrange degree 7 and Hermite degree 15 interpolation. The fourth node $t_{\text{ref}} \approx 0.3887$ serves as the reference for local coordinates.

The geometric condition number $\kappa_{\text{geo}} = \|\widetilde U_1^{-1}\|_F$ quantifies how coordinate errors amplify upon manifold reconstruction. Theorem~\ref{thm:total_error} gives $\|\hat P - P\|_F \leq \sqrt{2} \|\widetilde U_1\|_2 \|\Delta\|_F$, with $\|\widetilde U_1\|_2 = 1/\kappa_{\text{geo}}$. Table~\ref{tab:geom_cond} compares $\kappa_{\text{geo}}$ across the eight nodes for different stabilization strategies.

\begin{table}[htbp]
\centering
\caption{Geometric condition number $\|\widetilde U_1^{-1}\|_F$ across eight Chebyshev nodes. The theoretical minimum for the Frobenius norm is $\sqrt{p} \approx 3.16$, corresponding to $\widetilde U_1 = I_p$.}
\label{tab:geom_cond}
\begin{tabular}{c|c|c|c}
\hline
\textbf{Point ($t$)} & \textbf{Before MaxVol} & \textbf{Original (Jensen)} & \textbf{ MV-(C)V+A (Ours)} \\
\hline
0.0000 & 141987.72 & 34.83 & \textbf{3.54} \\
0.0495 & 484.49 & 32.99 & \textbf{3.45} \\
0.1883 & 146.85 & 29.44 & \textbf{3.26} \\
{\bf 0.3887} & {\bf 146.52} & {\bf27.75} & \textbf{3.16} \\
0.6113 & 285.63 & 29.40 & \textbf{3.28} \\
0.8117 & 658.94 & 33.40 & \textbf{3.60} \\
0.9505 & 187.41 & 37.51 & \textbf{3.92} \\
1.0000 & 160.51 & 39.30 & \textbf{4.05} \\
\hline
\end{tabular}
\end{table}

Without geometric stabilization, the local coordinate suffers distortion, with $\kappa_{\text{geo}}$ spiking to $1.42\times10^5$ at the left boundary, amplifying coordinate errors by five orders of magnitude and rendering high-precision interpolation impossible.
The original row-permutation strategy bounds this distortion but still permits $\kappa_{\text{geo}}$ to fluctuate between 27 and 39. While a substantial improvement, this still amplifies errors by an order of magnitude.
As shown in Table~\ref{tab:geom_cond}, our Householder-based MV-(C)V+A curve achieves conditioning close to the theoretical optimum. At the reference point $t_{\text{ref}}=0.3887$, we attain exactly $\kappa_{\text{geo}} = \sqrt{p} \approx 3.16$, corresponding to $\widetilde U_1 = I_p$. Even at domain boundaries, MV-(C)V+A maintains $\kappa_{\text{geo}} \leq 4.05$, reducing the geometric error amplification described in Theorem~\ref{thm:total_error}. The near-optimal conditioning preserves coordinate-domain interpolation accuracy upon manifold reconstruction.

Figure~\ref{fig:dual_stability} compares Riemannian normal coordinates, standard local coordinates, original MV coordinates and the proposed MV-(C)V+A. The left panel shows reconstruction error $\|\hat P(t) - P(t)\|_F$; the right panel shows orthogonality loss $\|\hat U(t)^T \hat U(t) - I_p\|_F$, evaluated over 200 test points.

\begin{figure}[htbp]
\centering 
\includegraphics[width=1.1\textwidth]{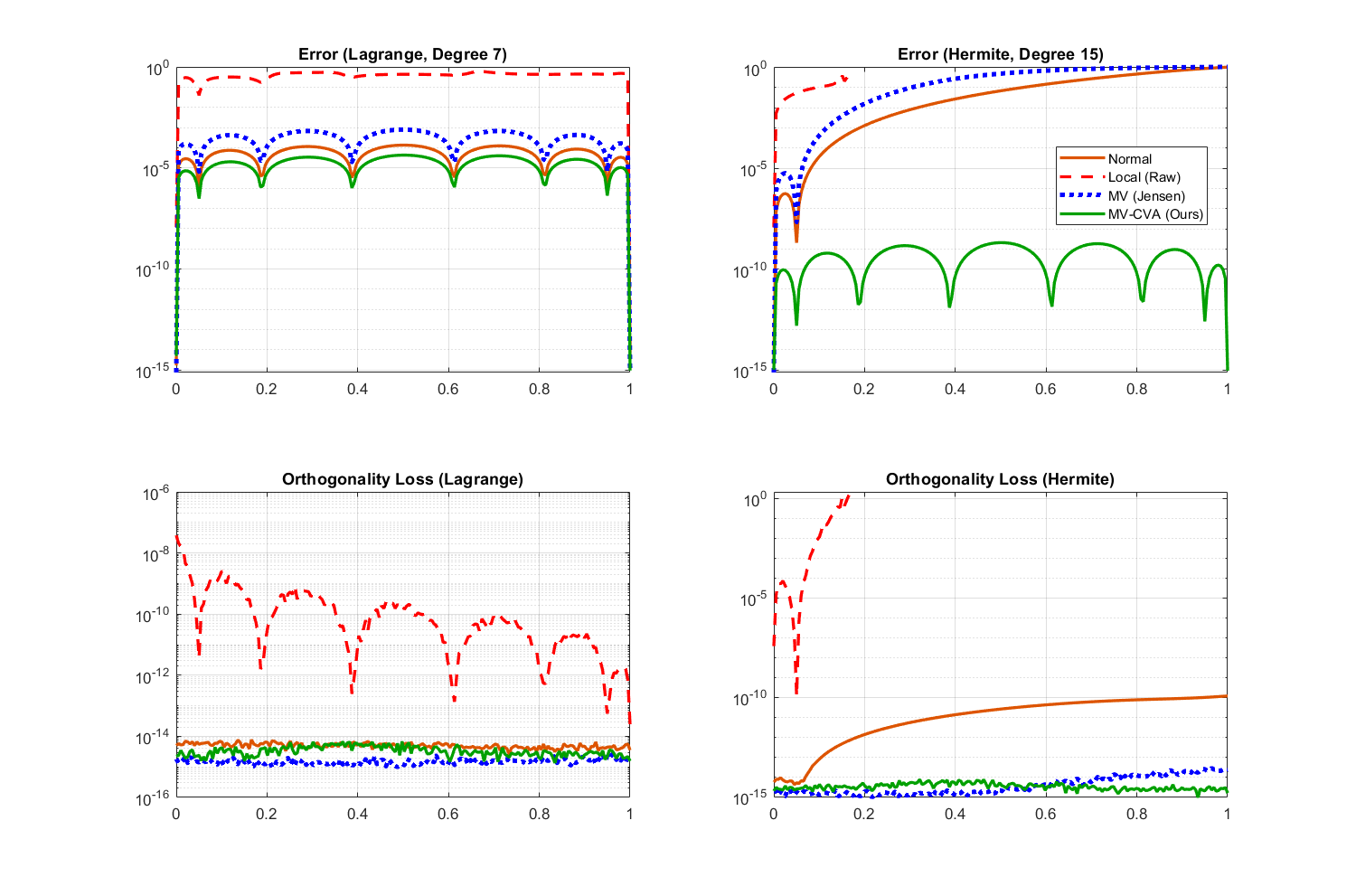}
\caption{Left: Relative reconstruction error $\|\hat P(t) - P(t)\|_F / \|P(t)\|_F$. Right: Orthogonality loss $\|\hat U(t)^T \hat U(t) - I_p\|_F$. }
\label{fig:dual_stability}
\end{figure} 
At degree $k=15$, the confluent Vandermonde matrices used by Normal, Local, and original MV coordinates become exponentially ill-conditioned ($\kappa > 10^{16}$) which causes the corresponding error curves to diverge, resulting in a severe loss of accuracy.
In contrast, MV-(C)V+A bypasses this algebraic barrier entirely. The Arnoldi process maintains perfect conditioning ($\kappa_2(\mathcal{Q}) = 1$) for the augmented basis matrix $\mathcal{Q} = [Q_f; Q_d]$. As shown in Figure~\ref{fig:dual_stability}, MV-(C)V+A's error decays smoothly to nearly machine precision. The right panel confirms that reconstructed matrices $\hat U(t)$ preserve orthonormality.

To test algorithmic limits, we increase nodes to $m=18$, which gives a degree $17$ Lagrange and degree $35$ Hermite interpolation. Figure~\ref{fig:dual_stability2} shows that at degree $35$, baseline methods relying on confluent Vandermonde matrices suffer complete algebraic collapse, with errors exploding toward $10^0$ and cascading orthogonality loss. MV-(C)V+A robustly maintains machine-precision accuracy in the domain.

\begin{figure}[H]
\centering 
\includegraphics[width=1.1\textwidth]{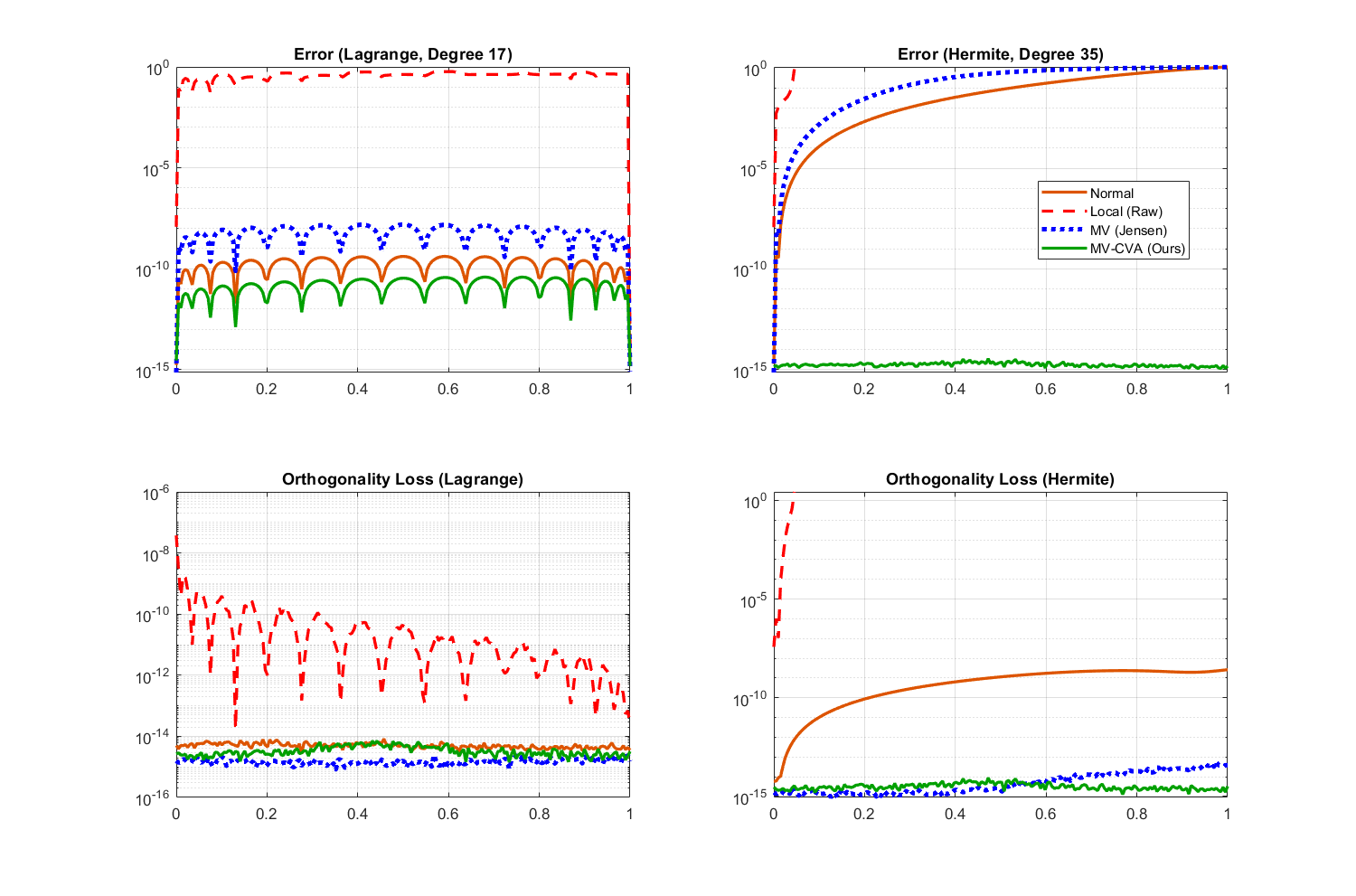}
\caption{Left: Relative reconstruction error $\|\hat P(t) - P(t)\|_F / \|P(t)\|_F$. Right: Orthogonality loss $\|\hat U(t)^T \hat U(t) - I_p\|_F$. }
\label{fig:dual_stability2}
\end{figure}

\subsection*{Example 2. Sensitivity to Data Perturbations and Noise Amplification}
In Example 1, we investigate the performance of the proposed method. In practical applications, e.g., Model Order Reduction (MOR), often involve manifold data containing external perturbations (e.g., SVD truncation errors or sensor noise). Therefore, this experiment explicitly validates the conditioning established in Theorem~\ref{thm:total_error} by analyzing how each algorithm amplifies inherent data noise. 

We adopt the same base construction on $\Gr(1000, 10)$ with the transcendental trajectory defined in \eqref{eq:transcendental_curve}. We select a moderate number of Chebyshev nodes, $m=10$, which corresponds to a degree 19 Hermite interpolation. At this degree, the analytical approximation error is theoretically well below $10^{-12}$. 

To simulate the influence of inexactness in practical data, we use a controlled, uniformly distributed artificial noise $\mathcal{E}_i \in \mathbb{R}^{n \times p}$ into the raw data matrices before orthogonalization:
 $$
    Y_{\text{noisy}}(t_i) = Y_{\text{exact}}(t_i) + \epsilon \cdot \frac{\mathcal{E}_i}{\|\mathcal{E}_i\|_F}, \quad i = 0, \ldots, m-1,
 $$
where the noise magnitude is strictly calibrated to $\epsilon = 10^{-10}$. We then extract the perturbed basis $U_{\text{noisy}}(t_i) = \mathrm{qr}(Y_{\text{noisy}}(t_i))$ and its exact analytical derivative to feed into the interpolation approaches. We track the reconstruction error against the true, unperturbed curve $P_{\text{true}}(t)$.

According to classical perturbation theory, the numerical output error is bounded by the sum of the theoretical approximation error and the data noise amplified by the condition number of the interpolation matrix system ($\kappa_{\text{alg}} \cdot \epsilon$). 

\begin{figure}[htbp]
\centering
 \includegraphics[width=1.05\textwidth]{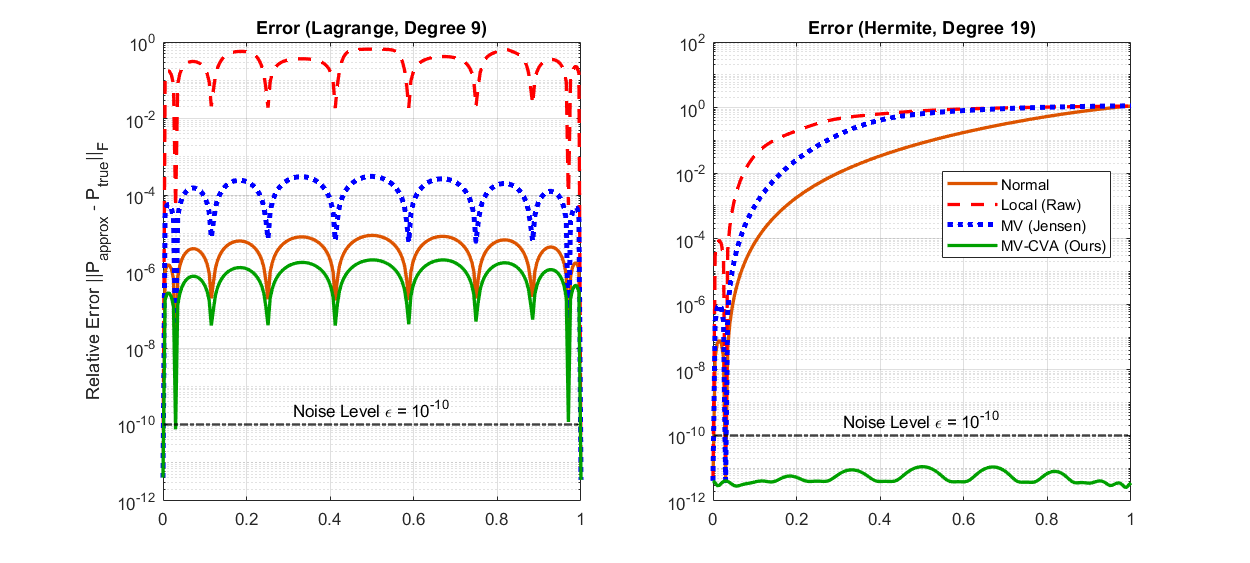}
\caption{Reconstruction error under data perturbation ($\epsilon = 10^{-10}$)}
\label{fig:noise_sensitivity}
\end{figure}

For the Normal, Local, and original MV coordinates, the underlying confluent Vandermonde matrix at degree 19 exhibits a condition number $\kappa_{\text{alg}} \approx 10^{13}$. Consequently, the microscopic data perturbation $\epsilon = 10^{-10}$ causes the total reconstruction error to be greatly magnified. As a comparison, the MV-(C)V+A  approach is much more stable, which strictly limits the error growth, mirroring the optimal conditioning $\kappa_{\text{alg}} = 1$. The total reconstruction error of MV-(C)V+A remains uniformly bounded near the injected noise level ($\approx 10^{-10}$). The numerical results show that MV-(C)V+A is robust for parametric subspace problems where data imperfections are unavoidable.

\subsection*{Example 3. Parametric Reduced-Order Modeling for 1D Helmholtz
Equation }

The Helmholtz equation is known to be difficult to parameterize globally due to physical resonances \cite{ernst2012difficult}, which cause the solution manifold to oscillate and twist highly nonlinearly with respect to the wavenumber. In this example, we consider the one-dimensional interior Helmholtz boundary value problem parameterized by the wavenumber $k$. The governing equation on the spatial domain $\Omega = [0, 1]$ is given by
\begin{align}
    - \frac{d^2 u(x; k)}{dx^2} - k^2 u(x; k) &= f(x), \quad x \in \Omega, \label{eq:1d_helmholtz_pde} \\
    u(0; k) = 0, \quad u(1; k) &= 0, \label{eq:1d_helmholtz_bc}
\end{align}
where $f(x)$ represents localized source excitations. 

Applying a standard spatial discretization (e.g., finite differences) on a uniform grid transforms the continuous problem into a parameter-dependent discrete linear system
\begin{equation} \label{eq:1d_helmholtz_discrete}
    A(k) Y(k) = F, \quad \text{where} \quad A(k) = L - k^2 I_n.
\end{equation}
Here, $L \in \mathbb{R}^{n \times n}$ is the discrete negative Laplacian matrix encoding the Dirichlet boundary conditions, $I_n$ is the identity matrix, and $F \in \mathbb{R}^{n \times p}$ contains $p$ distinct discrete source vectors. The corresponding full-order state responses are collected in $Y(k) \in \mathbb{R}^{n \times p}$. For this numerical experiment, the full system dimension is $n=500$, and the target POD subspace dimension is truncated at $p=8$.

Unlike empirical data fitting where derivative information is unavailable or requires unstable finite difference approximations, PDE-based ROM allows for the exact, inexpensive extraction of tangent vectors by the governing equations. By differentiating the state equation (\ref{eq:1d_helmholtz_discrete}) with respect to $k$, we obtain $A'(k)Y(k) + A(k)Y'(k) = 0$. Since $A'(k) = -2k I_n$, the exact analytical derivative of the state matrix is computed by solving one additional linear system with the already-factorized operator $A(k)$:
 $$
Y'(k) = A(k)^{-1} (2k Y(k)).
 $$
The matrices $Y(k)$ and their derivatives $Y'(k)$ are subsequently processed using the formulas in Section 3 to yield the orthogonal Stiefel representatives $U(k)$ and their tangent vectors $U'(k)$.
We are interested in the high-frequency parameter domain $k \in [10, 20]$, normalized to $t \in [0, 1]$. To construct a global, high-fidelity surrogate model, we select $m=12$ Chebyshev nodes. Extracting both $U(t_i)$ and $U'(t_i)$ at these nodes formulates a global Hermite interpolation problem of degree $2m-1 = 23$.

\begin{figure}[htbp]
\centering
  \includegraphics[width=0.95\textwidth]{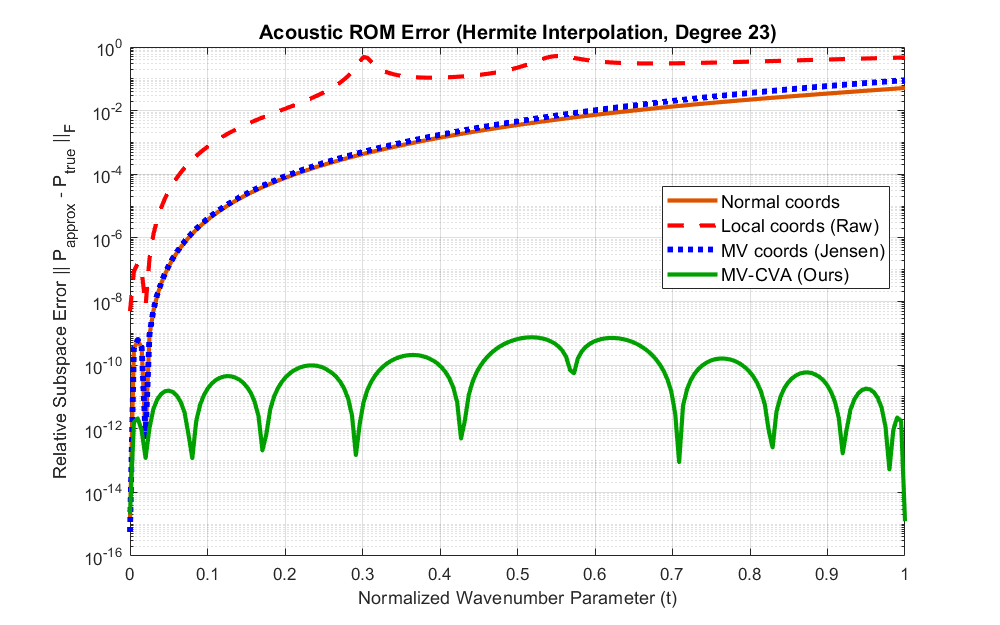}
\caption{Relative POD subspace error $\| \hat{P}(t) - P_{\text{true}}(t) \|_F / \| P_{\text{true}}(t) \|_F$ for the parametric Helmholtz ROM. The global Hermite interpolation is of degree 23. Traditional methods relying on confluent Vandermonde systems suffer complete algebraic collapse, whereas MV-(C)V+A  preserves machine precision.}
\label{fig:helmholtz_error}
\end{figure}

In Figure \ref{fig:helmholtz_error}, we compare the performance of several methods when derivative data is used at high degrees. For Hermite interpolation, the baseline methods must construct and solve confluent Vandermonde systems. At degree 23, the condition number of these matrices exceeds $10^{16}$ which causes numerical breakdown. Consequently, the baseline error degrades to $\mathcal{O}(10^0)$, rendering standard coordinate methods inapplicable. In contrast, the MV-(C)V+A approach avoids the instability entirely. Its Arnoldi-based orthogonalization ensures that the confluent interpolation system remains well conditioned ($\kappa_2 = 1$), regardless of polynomial degree. The MV-(C)V+A method successfully performs degree-23 global Hermite interpolation, maintaining relative subspace errors near $10^{-10}$ throughout the frequency range. The experiment confirms that MV-(C)V+A provides a stable foundation for constructing high-order, derivative-enhanced models in complex physical applications.

\section{Conclusion}
\label{sec7}

This paper introduced the MV-(C)V+A framework for high-order interpolation on the Grassmann manifold, which addresses the challenges of geometric distortion and algebraic ill-conditioning. By combining Householder-stabilized maximum volume local coordinates with Arnoldi-orthogonalized polynomial bases, the approach achieves a high level of numerical stability.

From a geometric perspective, the framework controls the local condition number near its theoretical lower bound ($\kappa_{\text{geo}} \approx \sqrt{p}$) \cite{goreinov2010how}, which mitigates the amplification of coordinate-induced errors. From an algebraic standpoint, the CV+A formulation yields well-conditioned interpolation systems with $\kappa_2 = 1$. The coordinate derivative mapping is expressed in a purely algebraic form, avoiding the numerical instability associated with differentiating SVD-based Riemannian mappings. Numerical experiments, including parametric reduced-order modeling of the Helmholtz equation, show that the MV-(C)V+A framework maintains near-machine-precision accuracy in high-degree regimes where classical approaches fail.

Future work will focus on adaptive strategies for polynomial degree and node selection, and extensions to other matrix manifolds such as the Stiefel manifolds and the rotation groups \cite{zimmermann2017matrix, zimmermann2022computing, massart2020quotient}. Beyond reduced-order modeling for physical PDEs, stable high-order Grassmann interpolation holds significant potential for constructing parameter-dependent preconditioners and coarse spaces in two-level stationary iterative methods \cite{ciaramella2024gentle}. Extending the approach to multivariate parameter domains via multivariate Arnoldi-type constructions \cite{zhang2025multivariate, zhu2025convergence} is a promising direction.

\bibliographystyle{plain}
\bibliography{reference}

\end{document}